\newtheorem{theorem}{Theorem}[section]
\newtheorem{lemma}[theorem]{Lemma}
\newtheorem{proposition}[theorem]{Proposition}
\newtheorem{corollary}[theorem]{Corollary}
\theoremstyle{definition}
\newtheorem{question}[theorem]{Question}
\newtheorem{remark}[theorem]{Remark}
\newtheorem*{remark*}{Remark}
\renewcommand{\Re}{\operatorname{Re}}
\newcommand{\ran}{\operatorname{ran}}
\renewcommand{\MR}[1]{}
\author{Michael Hartz}
\title{Finite dimensional approximations in operator algebras}
\address{Fachrichtung Mathematik, Universit\"at des Saarlandes, 66123 Saarbr\"ucken, Germany}
\email{hartz@math.uni-sb.de}
\thanks{The author was partially supported by a GIF grant and by the Emmy Noether Program of the German Research Foundation (DFG Grant 466012782).}
\date{\today}
\subjclass[2010]{Primary 47L55; Secondary 47L30, 47A20, 47B35}
\keywords{Non self-adjoint operator algebra, residual finite dimensionality, finite dimensional approximation}
\begin{document}

\begin{abstract}
  A non-self-adjoint operator algebra is said to be residually finite dimensional (RFD) if it embeds
  into a product of matrix algebras.
  We characterize RFD operator algebras in terms of their matrix state space, and moreover show
  that an operator algebra is RFD if and only if every representation can be approximated
  by finite dimensional ones in the point weak operator topology.
  This is a non-self-adjoint version of a theorem of Exel and Loring for $C^*$-algebras.
  Moreover, we construct an example of an operator algebra
  for which approximation in the point strong operator topology is not possible.
  As a consequence, the maximal $C^*$-algebra generated by this operator algebra is not RFD.
  This answers questions of Clou\^atre and Ramsey and of Clou\^atre and Dor-On.
\end{abstract}

\maketitle

\section{Introduction}

In this article, an \emph{operator algebra} is a norm closed (not necessarily self-adjoint) subalgebra
$\mathcal{A} \subset B(\mathcal{K})$ for some Hilbert space $\mathcal{K}$. We say that $\mathcal{A}$
is \emph{unital} if the identity operator on $\mathcal{K}$ belongs to $\mathcal{A}$.
By a \emph{representation} of $\mathcal{A}$, we mean a (not necessarily unital) completely contractive homomorphism $\pi: \mathcal{A} \to B(\mathcal{H})$
for some Hilbert space $\mathcal{H}$. Background information on operator algebras and their representations
can be found in \cite{BL04,Paulsen02,Pisier03}.

We will study operator algebras that can be recovered from their finite dimensional representations.
Explicitly, an operator algebra $\mathcal{A}$ is said to be \emph{residually finite dimensional (RFD)} if for every $n \in \mathbb{N}$ and every $a \in M_n(\mathcal{A})$, we have
\begin{equation}
  \label{eqn:norm}
  \|a\| = \sup \{ \|\pi^{(n)}(a)\| \},
\end{equation}
where the supremum is taken over all representations $\pi: \mathcal{A} \to B(\mathcal{H})$ with $\dim(\mathcal{H}) < \infty$.
Here, $\pi^{(n)}(a)$ is the element of $M_n(B(\mathcal{H}))$ obtained by applying $\pi$ to each entry of $a$.
Note that $\mathcal{A}$ is RFD if and only if there exist a family $\{\mathcal{H}_\lambda: \lambda \in \Lambda\}$
of finite dimensional Hilbert spaces and a completely isometric homomorphism
\begin{equation*}
  \pi: \mathcal{A} \to \prod_{\lambda \in \Lambda} B(\mathcal{H}_\lambda).
\end{equation*}

Residual finite dimensionality is a central concept in the theory of $C^*$-algebras.
If $\mathcal{A}$ is a $C^*$-algebra, then contractive homomorphisms $\mathcal{A} \to B(\mathcal{H})$
are automatically $*$-homomorphisms (see, for instance, \cite[Proposition A.5.8]{BL04}),
so for $C^*$-algebras, the notion of residual finite dimensionality considered here agrees with the usual $C^*$-algebraic
notion.
 In the setting of non-self-adjoint operator algebras, residual finite dimensionality was introduced
 by Mittal and Paulsen \cite[Section 3]{MP10} and in particular studied in the context of operator algebras
 of functions.
 More recently, RFD operator algebras have been studied as objects in their own right,
 such as in the works of Clou\^atre and Marcoux \cite{CM19}, Clou\^atre and Ramsey \cite{CR19}, Clou\^atre and Dor-On \cite{CD21} and Thompson \cite{Thompson21}.

 Examples of non-self-adjoint RFD operator algebras include all uniform algebras, all finite dimensional operator algebras
 \cite[Corollary 3.6]{CR19}, all multiplier algebras of reproducing kernel Hilbert spaces \cite{MP10} and
 the algebra of all bounded upper triangular operators on $\ell^2$.
 More involved examples of RFD operator algebras are the Schur--Agler algebra (the algebra
 of holomorphic functions on the polydisc that are bounded on all commuting strict contractions)
 and the Douglas--Paulsen algebra on an annulus; see \cite[Section 5]{MP10} for details.
 
In the context of $C^*$-algebras, a fundamental theorem characterizing residual finite dimensionality
is due to Exel and Loring \cite{EL92}.
If $\mathcal{A}$ is a unital $C^*$-algebra, we let $S_1(\mathcal{A})$ denote the state space of $\mathcal{A}$,
i.e.\ the space of all unital contractive functionals on $\mathcal{A}$, equipped with the weak-$*$ topology.
The GNS construction shows that for every state $\varphi \in S_1(\mathcal{A})$,
there exist a unital $*$-homomorphism $\pi: \mathcal{A} \to B(\mathcal{H})$ and
a unit vector $x \in \mathcal{H}$ with
\begin{equation*}
  \varphi(a) = \langle \pi(a) x,x \rangle \quad \text{ for all } a \in A.
\end{equation*}
The state $\varphi$ is said to be \emph{finite dimensional} if $\mathcal{H}$ can be taken to be finite dimensional.
A representation $\pi: \mathcal{A} \to B(\mathcal{H})$ is said to be \emph{finite dimensional} if
the essential space, which is the closed linear span of $\pi(\mathcal{A}) \mathcal{H}$, is finite dimensional.

\begin{theorem}[Exel--Loring]
  \label{thm:EL_intro}
  The following assertions are equivalent for a unital $C^*$-algebra $\mathcal{A}$:
  \begin{enumerate}[label=\normalfont{(\roman*)}]
    \item $\mathcal{A}$ is RFD;
    \item the set of finite dimensional states is weak-$*$ dense in $S(\mathcal{A})$;
    \item for every representation $\pi: \mathcal{A} \to B(\mathcal{H})$,
      there exists a net $(\pi_\lambda)$ of finite dimensional representations
      such that $(\pi_\lambda(a))$ converges to $\pi(a)$ in SOT for all $a \in \mathcal{A}$.
  \end{enumerate}
\end{theorem}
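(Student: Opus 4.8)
The plan is to prove the cycle of implications $\mathrm{(i)} \Rightarrow \mathrm{(ii)} \Rightarrow \mathrm{(iii)} \Rightarrow \mathrm{(i)}$. For $\mathrm{(iii)} \Rightarrow \mathrm{(i)}$ I would take a faithful representation $\pi_u \colon \mathcal{A} \to B(\mathcal{H}_u)$, which for a $C^*$-algebra is automatically completely isometric, and choose by $\mathrm{(iii)}$ a net $(\pi_\lambda)$ of finite dimensional representations with $\pi_\lambda(a) \to \pi_u(a)$ in SOT for every $a$. Then $\pi_\lambda^{(n)}(a) \to \pi_u^{(n)}(a)$ in SOT for each $a \in M_n(\mathcal{A})$, so for every unit vector $x$ one has $\| \pi_u^{(n)}(a) x\| = \lim_\lambda \|\pi_\lambda^{(n)}(a) x\| \le \sup\{\|\pi^{(n)}(a)\| : \pi \text{ finite dimensional}\}$; taking the supremum over $x$ and using that $\pi_u$ is completely isometric gives $\|a\| = \|\pi_u^{(n)}(a)\| \le \sup\{\dots\}$, while the reverse inequality is immediate since every representation is completely contractive, yielding \eqref{eqn:norm}.

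For $\mathrm{(i)} \Rightarrow \mathrm{(ii)}$ I would argue by Hahn--Banach separation. The set $F$ of finite dimensional states is convex: if $\varphi_j = \langle \pi_j(\cdot) x_j, x_j\rangle$ with $\pi_j$ finite dimensional and $\|x_j\| = 1$, then $t\varphi_1 + (1-t)\varphi_2$ is the vector state of $\pi_1 \oplus \pi_2$ at the unit vector $(\sqrt{t}\, x_1, \sqrt{1-t}\, x_2)$. If some $\varphi_0 \in S(\mathcal{A})$ were not in the weak-$*$ closure of $F$, separation would produce a self-adjoint $a \in \mathcal{A}$ and $c \in \mathbb{R}$ with $\psi(a) \le c < \varphi_0(a)$ for all $\psi \in F$. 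For any finite dimensional representation $\pi$ this forces $\pi(a) \le c\,1$ on its essential space, hence $\pi(a + \|a\| 1) \le (c + \|a\|) 1$ there, and since $a + \|a\| 1 \ge 0$ one gets $\|\pi(a + \|a\| 1)\| \le c + \|a\|$. Because $\mathcal{A}$ is RFD this forces $\|a + \|a\| 1\| \le c + \|a\|$, i.e.\ $\max \sigma(a) \le c$, contradicting $c < \varphi_0(a) \le \max \sigma(a)$.

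The heart of the theorem is $\mathrm{(ii)} \Rightarrow \mathrm{(iii)}$. First I would reduce to cyclic representations: writing a representation $\pi$ of the unital $C^*$-algebra $\mathcal{A}$ as $\pi = \big(\bigoplus_\alpha \pi_\alpha\big) \oplus 0$ with each $\pi_\alpha$ cyclic, a finite partial direct sum of finite dimensional approximants to the $\pi_\alpha$, with the zero summand appended, approximates $\pi$ in SOT once the partial sum is large, as all the maps involved are contractive. So let $\pi$ be cyclic with cyclic vector $\xi$ and put $\varphi = \langle \pi(\cdot) \xi, \xi\rangle$; given $b_1 = 1, b_2, \dots, b_k \in \mathcal{A}$ and $\varepsilon > 0$, the goal is a finite dimensional representation $\rho$ of $\mathcal{A}$ on $\mathcal{H}$ with $\|\rho(b_i)\xi - \pi(b_i)\xi\| < \varepsilon$ for all $i$. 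By $\mathrm{(ii)}$ choose finite dimensional states $\varphi_\mu \to \varphi$ weak-$*$ with GNS data $(\rho_\mu, \mathcal{H}_\mu, \xi_\mu)$. The key point is that the Gram matrices converge,
\[
  \big[ \langle \rho_\mu(b_j)\xi_\mu, \rho_\mu(b_i)\xi_\mu \rangle \big]_{i,j} = \big[ \varphi_\mu(b_i^* b_j) \big]_{i,j} \longrightarrow \big[ \varphi(b_i^* b_j) \big]_{i,j} = \big[ \langle \pi(b_j)\xi, \pi(b_i)\xi \rangle \big]_{i,j}.
\]
Assuming $\mathcal{H}$ infinite dimensional (otherwise $\pi$ is itself finite dimensional), I embed $\mathcal{H}_\mu$ isometrically into $\mathcal{H}$ and extend $\rho_\mu$ by $0$; then the tuples $(\rho_\mu(b_i)\xi_\mu)_i$ and $(\pi(b_i)\xi)_i$ lie in $\mathcal{H}$ with nearly equal Gram matrices, and an elementary perturbation argument (tuples with sufficiently close Gram matrices are carried into one another, up to $\varepsilon$, by a unitary) provides a unitary $u$ on $\mathcal{H}$ with $\|u \rho_\mu(b_i)\xi_\mu - \pi(b_i)\xi\| < \varepsilon$ for $\mu$ large; then $\rho := u \rho_\mu(\cdot) u^*$ works, after replacing $\xi_\mu$ by $u\xi_\mu$ and absorbing one further error via $\|\rho(b_i)\| \le \|b_i\|$. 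To conclude, I index the approximating net by the finite subsets $S \subset \mathcal{A}$ that are closed under the relevant products, together with a tolerance: since $\{\pi(b)\xi : b \in \mathcal{A}\}$ is dense in $\mathcal{H}$ and every $\rho$ produced satisfies $\|\rho(a)\| \le \|a\|$, a three-$\varepsilon$ estimate upgrades approximation at the vectors $\pi(b)\xi$ to $\rho(a) \to \pi(a)$ in SOT for all $a \in \mathcal{A}$.

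The main obstacle is this last step, $\mathrm{(ii)} \Rightarrow \mathrm{(iii)}$: the finite dimensional GNS representations $\rho_\mu$ a priori live on unrelated abstract Hilbert spaces, and the work lies in transporting them onto the fixed space $\mathcal{H}$ so that genuine SOT convergence results. The two ingredients that require care are the Gram matrix perturbation lemma and the bookkeeping needed to pass from a dense set of vectors to all of $\mathcal{H}$, where complete contractivity of representations is used to keep the net uniformly bounded.
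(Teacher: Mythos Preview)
Your argument is correct and is essentially the classical Exel--Loring proof. Note, however, that the paper does not give its own proof of Theorem~\ref{thm:EL_intro}: it is quoted as a known result of Exel and Loring. What the paper does prove is the non-self-adjoint generalization (Theorem~\ref{T:EL_nsa}), and it remarks that the $C^*$-case (i)~$\Leftrightarrow$~(iii) can be recovered from that theorem because for nets of $*$-representations, WOT convergence of $\pi_\lambda(a)$ together with WOT convergence of $\pi_\lambda(a)^*\pi_\lambda(a)=\pi_\lambda(a^*a)$ forces SOT convergence.

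It is still worth comparing the two routes. For (i)~$\Rightarrow$~(ii) you use ordinary Hahn--Banach separation in $\mathcal{A}'$ together with positivity, which suffices for scalar states; the paper, needing density of \emph{matrix} states, invokes the Effros--Winkler matrix-convex separation theorem instead. For (ii)~$\Rightarrow$~(iii) you follow the GNS route: reduce to cyclic representations, approximate the cyclic state by finite dimensional states, and use a Gram-matrix perturbation lemma to transport the GNS spaces unitarily into $\mathcal{H}$. The paper's mechanism is quite different and avoids GNS entirely: it compresses $\pi$ by an increasing net of finite rank projections $P_\lambda$, approximates each compression $a\mapsto P_\lambda\pi(a)P_\lambda$ by a finite dimensional matrix state, and then dilates that matrix state to a finite dimensional representation. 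This yields only $P_\lambda\pi_\lambda(a)P_\lambda\to\pi(a)$ in SOT-$*$, hence $\pi_\lambda(a)\to\pi(a)$ in WOT, which is exactly what survives in the non-self-adjoint setting. Your approach is more direct and gives SOT immediately, but it genuinely uses the $*$-structure (through $\varphi_\mu(b_i^*b_j)\to\varphi(b_i^*b_j)$), which is why the paper cannot run your argument in general and must settle for WOT.
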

In fact, Exel and Loring establish the appropriate version of this result in the non-unital setting.
The implication (iii) $\Rightarrow$ (i) in the Exel--Loring theorem follows fairly
immediately by applying (iii) to any faithful representation of $\mathcal{A}$.
Thus, the implication (i) $\Rightarrow$ (iii) can be interpreted as the following statement:
If one faithful representation of $\mathcal{A}$ can be approximated by finite dimensional ones,
then every representation of $\mathcal{A}$ can.

Clou\^atre and Dor-On \cite{CD21} raised the question of whether there is a version of the Exel--Loring
theorem for non-self-adjoint operator algebras.
The first goal of this article to establish such a result.

To state the non-self-adjoint version of the Exel--Loring theorem,
it is convenient to first recall some terminology.
If $\mathcal{A}$ is a unital operator algebra,
a \emph{matrix state} of $\mathcal{A}$ is a unital completely contractive (u.c.c.) linear map
$\varphi: A \to M_n$, where we write $M_n = M_n(\mathbb{C})$. The \emph{matrix state space} of $\mathcal{A}$ 
is the collection $S_n(\mathcal{A}) = (S_n(\mathcal{A}))_{n=1}^\infty$ of sets
\begin{equation*}
  S_n(\mathcal{A}) = \{ \varphi: \mathcal{A} \to M_n: \varphi \text{ is linear and u.c.c.}\}.
\end{equation*}
Since contractive linear maps into $\mathbb{C}$ are automatically completely contractive,
this notation is consistent with the earlier notation $S_1(\mathcal{A})$ for the state space.
We may identify each $S_n(\mathcal{A})$ with a subspace of $M_n(\mathcal{A}')$, where $\mathcal{A}'$ denotes
the dual space of $\mathcal{A}$. We equip $M_n(\mathcal{A}')$ with the product topology of the weak-$*$ topology on $\mathcal{A}'$.

The Arveson extension theorem and the Stinespring dilation theorem imply
that every matrix state $\varphi: \mathcal{A} \to M_n$ dilates to a unital representation of $\mathcal{A}$.
This means that there exist a Hilbert space $\mathcal{H}$, an isometry $w: \mathbb{C}^n \to \mathcal{H}$
and a unital completely contractive homomorphism $\pi: \mathcal{A} \to B(\mathcal{H})$ so that
\begin{equation*}
  \varphi(a) = w^* \pi(a) w \quad \text{ for all } a \in \mathcal{A}.
\end{equation*}
We say that the matrix state $\varphi$ is \emph{finite dimensional} if $\mathcal{H}$ can be chosen to be finite dimensional.

A representation of $\mathcal{A}$ is said to be finite dimensional if the closed linear span of $C^*(\pi(\mathcal{A})) \mathcal{H}$ is finite dimensional. Since $\mathcal{A}$ is assumed to be unital, $\pi(1)$ is a contractive idempotent and hence an orthogonal projection.
Thus, $\pi$ is a finite dimensional representation if and only if $\pi(1) \mathcal{H}$ is finite dimensional.

In Section \ref{sec:EL}, we will establish the following version of the Exel--Loring theorem
for non-self-adjoint operator algebras.

\begin{theorem}
  \label{thm:RFD_char_intro}
  The following assertions are equivalent for a (not necessarily self-adjoint) unital operator algebra $\mathcal{A}$:
  \begin{enumerate}[label=\normalfont{(\roman*)}]
    \item $\mathcal{A}$ is RFD;
    \item the set of finite dimensional matrix states is weak-$*$ dense in the matrix state space $S(\mathcal{A})$;
    \item for every representation $\pi: \mathcal{A} \to B(\mathcal{H})$, there exists a net $(\pi_\lambda)$
      of finite dimensional representations such that $(\pi_\lambda(a))$ converges to $\pi(a)$ in WOT for all $a \in \mathcal{A}$.
  \end{enumerate}
\end{theorem}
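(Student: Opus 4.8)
The plan is to prove the cycle (i) $\Rightarrow$ (ii) $\Rightarrow$ (iii) $\Rightarrow$ (i). The heart of the argument is (i) $\Rightarrow$ (ii), which I would deduce from the $C^*$-algebraic Exel--Loring theorem (Theorem~\ref{thm:EL_intro}) by passing to a convenient ambient $C^*$-algebra; the other two implications should then be soft.

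For (i) $\Rightarrow$ (ii): since $\mathcal{A}$ is RFD I would fix a unital completely isometric homomorphism $\iota\colon\mathcal{A}\hookrightarrow\mathcal{B}:=\prod_{\lambda}B(\mathcal{H}_\lambda)$ with each $\mathcal{H}_\lambda$ finite dimensional (one may take $\iota$ unital by compressing with the projection $\iota(1)$). The key observation is that $\mathcal{B}$ is an RFD $C^*$-algebra, since its coordinate $*$-representations are finite dimensional and jointly faithful. Now given a matrix state $\varphi\in S_n(\mathcal{A})$, the Arveson extension theorem produces a u.c.p.\ map $\tilde\varphi\colon\mathcal{B}\to M_n$ with $\tilde\varphi\circ\iota=\varphi$, and Stinespring's theorem writes $\tilde\varphi=W^*\rho(\cdot)W$ for a (minimal, hence unital) $*$-representation $\rho\colon\mathcal{B}\to B(\mathcal{K})$ and an isometry $W\colon\mathbb{C}^n\to\mathcal{K}$. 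Applying the implication (i) $\Rightarrow$ (iii) of Theorem~\ref{thm:EL_intro} to $\mathcal{B}$ and $\rho$ yields a net $(\rho_\mu)$ of finite dimensional $*$-representations of $\mathcal{B}$ on $\mathcal{K}$ with $\rho_\mu\to\rho$ in SOT. Restricting along $\iota$ gives finite dimensional representations $\rho_\mu\circ\iota$ of $\mathcal{A}$, and compressing by $W$ gives matrix states $W^*\rho_\mu(\iota(\cdot))W$; since $W^*\rho_\mu(1)W\to I_n$, one can adjust $W$ to a nearby isometry $W_\mu$ so that $\varphi_\mu:=W_\mu^*\rho_\mu(\iota(\cdot))W_\mu$ is genuinely unital, hence a bona fide finite dimensional matrix state (passing to the finite dimensional essential space of $\rho_\mu\circ\iota$ realizes it on a finite dimensional Hilbert space). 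Since $W_\mu$ has finite rank and $\rho_\mu\to\rho$ in SOT, $\varphi_\mu(a)\to W^*\rho(\iota(a))W=\varphi(a)$ in $M_n$ for every $a\in\mathcal{A}$, i.e.\ $\varphi_\mu\to\varphi$ weak-$*$; hence the finite dimensional matrix states are weak-$*$ dense.

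For (ii) $\Rightarrow$ (iii): let $\pi\colon\mathcal{A}\to B(\mathcal{H})$ be a representation. Replacing $\mathcal{H}$ by $\pi(1)\mathcal{H}$ I may assume $\pi$ is unital, and if $\dim\mathcal{H}<\infty$ there is nothing to prove, so assume $\dim\mathcal{H}=\infty$. I would index a net by triples $(M,F,\varepsilon)$ with $M\subseteq\mathcal{H}$ a finite dimensional subspace, $F\subseteq\mathcal{A}$ finite and $\varepsilon>0$, directed in the obvious way. Given such a triple, compress $\pi$ to $M$ to obtain the matrix state $\varphi_M:=P_M\pi(\cdot)P_M$, use (ii) to choose a finite dimensional matrix state $\psi$ with $\|\psi(a)-\varphi_M(a)\|<\varepsilon$ for $a\in F$, dilate $\psi=u^*\sigma(\cdot)u$ with $\sigma\colon\mathcal{A}\to B(\mathcal{K})$ a representation on a finite dimensional space and $u\colon M\to\mathcal{K}$ an isometry, and pick an isometry $V\colon\mathcal{K}\to\mathcal{H}$ with $Vu$ equal to the inclusion $M\hookrightarrow\mathcal{H}$ (possible since $\dim\mathcal{K}<\infty=\dim\mathcal{H}$). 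Then $\rho:=V\sigma(\cdot)V^*$ is a finite dimensional representation of $\mathcal{A}$ on $\mathcal{H}$: it is multiplicative since $V^*V=I$, completely contractive since $V$ is an isometry, and finite dimensional since $C^*(\rho(\mathcal{A}))\subseteq VB(\mathcal{K})V^*$. Using $V^*\xi=u\xi$ for $\xi\in M$ one computes $\langle\rho(a)\xi,\zeta\rangle=\langle\psi(a)\xi,\zeta\rangle$ for $\xi,\zeta\in M$, which differs from $\langle\pi(a)\xi,\zeta\rangle$ by at most $\varepsilon\|\xi\|\,\|\zeta\|$ when $a\in F$; letting the triple vary gives a net of finite dimensional representations converging to $\pi$ in WOT. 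For (iii) $\Rightarrow$ (i): as in the Exel--Loring theorem, apply (iii) to a completely isometric representation $\pi$, obtaining finite dimensional $\pi_\lambda\to\pi$ in WOT; then $\pi_\lambda^{(n)}(a)\to\pi^{(n)}(a)$ in WOT for $a\in M_n(\mathcal{A})$, and since operator-norm balls are WOT-closed while $\|\pi_\lambda^{(n)}(a)\|\le\|a\|$, one gets $\|a\|=\|\pi^{(n)}(a)\|=\sup_\lambda\|\pi_\lambda^{(n)}(a)\|$; compressing each $\pi_\lambda$ to its finite dimensional essential space produces representations on finite dimensional Hilbert spaces realizing \eqref{eqn:norm}.

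The one implication carrying real content is (i) $\Rightarrow$ (ii) --- specifically the idea of embedding $\mathcal{A}$ into the RFD $C^*$-algebra $\prod_\lambda B(\mathcal{H}_\lambda)$ and bootstrapping off the $C^*$-algebraic Exel--Loring theorem. I expect the difficulties there to be essentially bookkeeping: verifying that restricting a finite dimensional $*$-representation of $\mathcal{B}$ to $\mathcal{A}$ and then compressing preserves finite dimensionality, and restoring exact unitality of the compressed maps; these are nuisances rather than a genuine obstacle. It should also be noted that the argument only delivers WOT, not SOT, convergence in (iii), and indeed the later part of the paper shows that SOT convergence can genuinely fail, so this is not merely a defect of the method.
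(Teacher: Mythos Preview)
Your argument is correct; the implications (ii) $\Rightarrow$ (iii) and (iii) $\Rightarrow$ (i) are essentially identical to the paper's, but your (i) $\Rightarrow$ (ii) takes a genuinely different route. The paper's primary proof of (i) $\Rightarrow$ (ii) works intrinsically: it shows the finite dimensional matrix states form a matrix convex set and then applies the Effros--Winkler separation theorem for matrix convex sets, arriving at a contradiction via the operator system $\mathcal{A}+\mathcal{A}^*$. You instead embed $\mathcal{A}$ into the RFD $C^*$-algebra $\mathcal{B}=\prod_\lambda B(\mathcal{H}_\lambda)$, extend a given matrix state to $\mathcal{B}$ by Arveson, and then invoke the classical Exel--Loring theorem for $\mathcal{B}$ at the level of the Stinespring dilation. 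The paper in fact records exactly this alternative in a remark following the proof (there citing the matrix-state version for $C^*$-algebras from \cite{ANT19}, whereas you derive what is needed directly from Theorem~\ref{thm:EL_intro} via Stinespring and a mild renormalization of the isometry). Your approach is more economical in that it avoids matrix convexity machinery, at the cost of importing the $C^*$-result as a black box; the paper's approach is self-contained and makes the role of the matrix state space structurally transparent.
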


The equivalence of (i) and (iii) continues to hold in the non-unital setting; see Corollary \ref{cor:non-unital}.

There are two obvious differences between the Exel--Loring theorem for $C^*$-algebras (Theorem \ref{thm:EL_intro})
and Theorem \ref{thm:RFD_char_intro} for not necessarily self-adjoint algebras.
Firstly, in items (ii), the state space in the $C^*$-case is replaced with the matrix state space in the non-self-adjoint setting. In Section \ref{sec:state_space}, we will show that for a unital operator algebra,
the finite dimensional states are weak-$*$ dense in the state space if and only if Equation \eqref{eqn:norm}
holds for $n=1$, that is, if and only if the norm of $\mathcal{A}$ can be recovered from finite dimensional representations.

Secondly, and perhaps more importantly for the purposes of this article, the approximation by finite dimensional representations
in (iii) is in SOT in the $C^*$-case, but only in WOT in the non-self-adjoint setting.
However, for $C^*$-algebras, the two conditions are the same.
This follows from the simple operator theory fact that for a net $(A_\lambda)$ of bounded
operators, WOT convergence $A_\lambda \to A$ and $A_\lambda^* A_\lambda \to A^* A$ implies SOT convergence
$A_\lambda \to A$.
Thus, the equivalence of (i) and (iii) in Theorem \ref{thm:EL_intro} can be recovered
from the corresponding equivalence in Theorem \ref{thm:RFD_char_intro}.

Nonetheless, the distinction between WOT and SOT turns out to be relevant, and it is natural to ask if WOT convergence in Theorem \ref{thm:RFD_char_intro} can
be improved to SOT convergence.
In fact, this is the non-self-adjoint version of the Exel--Loring theorem
that Clou\^atre and Dor-On asked for; see \cite[Question 2]{CD21}.
They also asked if, even better, approximation in SOT-$*$ is possible, meaning
that $(\pi_\lambda(a))$ converges to $\pi(a)$ in SOT and $(\pi_\lambda(a)^*)$ converges
to $\pi(a)^*$ in SOT for all $a \in \mathcal{A}$; this is again automatic for $C^*$-algebras.
Their question was motivated by considerations regarding the maximal $C^*$-algebra of an operator algebra,
which we will review below.
Clou\^atre and Dor-On also showed that SOT-$*$ approximation is indeed possible for certain classes of operator algebras. For instance, their results include the statement that the universal norm closed operator algebra
generated by $d$ commuting contractions has this approximation property; see \cite[Corollary 5.14]{CD21}.
This has the following operator theory consequence: Every $d$-tuple of commuting contractions on a Hilbert space can be approximated in SOT-$*$ by a $d$-tuple of commuting matrices.

In Section \ref{sec:SOT}, we will show that SOT-approximation, and hence SOT-$*$ approximation,
is not possible for general RFD operator algebras. In some sense,
this also means that it is no accident that the SOT-$*$ approximation results of Clou\^atre and Dor-On
used specific properties of the operator algebras.
Let $\mathbb{D}$ denote the open unit disc in $\mathbb{C}$, let $\mathbb{T} = \partial \mathbb{D}$, and let
\begin{equation*}
  A(\mathbb{D}) = \{f \in C( \overline{\mathbb{D}}): f \big|_{\mathbb{D}} \text{ is holomorphic}\}
\end{equation*}
be the disc algebra. We think of $A(\mathbb{D})$ as a subalgebra of $C(\mathbb{T})$ by virtue of the maximum
modulus principle.

\begin{theorem}
  \label{thm:not_approx_intro}
  Let
  \begin{equation*}
  \mathcal{B} = \left\{
    \begin{bmatrix}
      {f} & 0 \\
      h & \overline{g}
  \end{bmatrix}: f,g \in A(\mathbb{D}), h \in C(\mathbb{T}) \right\} \subset M_2(C(\mathbb{T})).
\end{equation*}
Then:
\begin{enumerate}[label=\normalfont{(\alph*)}]
  \item $\mathcal{B}$ is a unital operator algebra that is RFD;
  \item there exists a unital representation $\pi: \mathcal{B} \to B(\mathcal{H})$
    which is not the point SOT-limit of a net of finite dimensional representations of $\mathcal{B}$.
\end{enumerate}
\end{theorem}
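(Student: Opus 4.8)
The plan is to dispose of part (a) by a short structural argument, and to prove part (b) by constructing one particular representation of $\mathcal{B}$ and exhibiting a single relation inside $\mathcal{B}$ that no net of finite dimensional representations can reconcile with point SOT convergence.

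For (a): a direct computation shows that $\mathcal{B}$ is closed under multiplication and contains the identity matrix, and it is norm closed because the entries of a convergent net of elements of $\mathcal{B}$ converge uniformly and $A(\mathbb{D})$ is closed in $C(\mathbb{T})$; hence $\mathcal{B}$ is a unital operator algebra. Now $M_2(C(\mathbb{T})) \cong C(\mathbb{T},M_2)$ is RFD, since its point evaluations $\mathrm{ev}_t$ are $*$-homomorphisms into $M_2$ and are jointly completely isometric; and a unital sub-operator-algebra of an RFD operator algebra is again RFD, because the restrictions to it of a jointly completely isometric family of finite dimensional representations are still jointly completely isometric. Thus $\mathcal{B}$ is RFD; in fact $\{\mathrm{ev}_t|_{\mathcal{B}}:t\in\mathbb{T}\}$ already witnesses this.

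For (b): write $S=P_{H^2}M_z|_{H^2}$ for the unilateral shift on $H^2$ and $T_\phi=P_{H^2}M_\phi|_{H^2}$ for the Toeplitz operator with symbol $\phi\in L^\infty(\mathbb{T})$. I will take $\pi:\mathcal{B}\to B(H^2\oplus H^2)$ to be the compression to $H^2\oplus H^2$ of the multiplication representation of $M_2(C(\mathbb{T}))$ on $L^2(\mathbb{T})\oplus L^2(\mathbb{T})$, that is,
\[
  \pi\!\begin{bmatrix} f & 0 \\ h & \overline{g} \end{bmatrix}=\begin{bmatrix} f(S) & 0 \\ T_h & g(S)^* \end{bmatrix}.
\]
That this really is a unital completely contractive homomorphism, and not merely a completely contractive map, follows from Sarason's semi-invariance lemma: the subspaces $\mathcal{M}_1:=0\oplus(H^2)^\perp$ and $\mathcal{M}_2:=H^2\oplus L^2$ of $L^2\oplus L^2$ are both invariant for the image of $\mathcal{B}$ (for $\mathcal{M}_1$ one uses that $\overline g\in\overline{H^\infty}$ leaves $(H^2)^\perp$ invariant, for $\mathcal{M}_2$ that $f\in H^\infty$ leaves $H^2$ invariant), and $\mathcal{M}_2\ominus\mathcal{M}_1=H^2\oplus H^2$, so compression to this semi-invariant subspace restricts on $\mathcal{B}$ to a unital completely contractive homomorphism, which is $\pi$.

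The crucial algebraic fact is that, writing $a=\left[\begin{smallmatrix} z & 0 \\ 0 & 0\end{smallmatrix}\right]$, $b=\left[\begin{smallmatrix} 0 & 0 \\ 0 & \overline{z}\end{smallmatrix}\right]$, $e=\left[\begin{smallmatrix} 0 & 0 \\ 1 & 0\end{smallmatrix}\right]$ in $\mathcal{B}$, one has $bea=e$ (because $\overline z z=1$ on $\mathbb{T}$), hence $b^n e a^n=e$ for all $n$. Suppose toward a contradiction that $\pi$ is the point SOT limit of a net of finite dimensional representations $\pi_\lambda:\mathcal{B}\to B(H^2\oplus H^2)$. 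In each $\pi_\lambda$ the contraction $B_\lambda:=\pi_\lambda(b)$ acts on the finite dimensional space $\pi_\lambda(q)(H^2\oplus H^2)$, so it has a largest reducing subspace $\mathcal{U}_\lambda$ on which it is unitary, while $\|B_\lambda^{*n}\zeta\|\to0$ for every $\zeta\perp\mathcal{U}_\lambda$; taking adjoints in $\pi_\lambda(e)=B_\lambda^n\pi_\lambda(e)\pi_\lambda(a)^n$ and letting $n\to\infty$ then gives $\pi_\lambda(e)^*\zeta=0$ for $\zeta\perp\mathcal{U}_\lambda$, i.e. $\ran\pi_\lambda(e)\subseteq\mathcal{U}_\lambda$. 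Consequently, for $\xi:=(1,0)\in H^2\oplus H^2$ and $w_\lambda:=\pi_\lambda(e)\xi\in\mathcal{U}_\lambda$, we obtain $\|\pi_\lambda(be)\xi\|=\|B_\lambda w_\lambda\|=\|w_\lambda\|=\|\pi_\lambda(e)\xi\|$ since $B_\lambda$ acts unitarily on $\mathcal{U}_\lambda$. But $\pi(e)\xi=(0,1)$ has norm $1$ whereas $\pi(be)\xi=(0,S^*1)=0$, so point SOT convergence would force $\|\pi_\lambda(e)\xi\|\to1$ and $\|\pi_\lambda(be)\xi\|\to0$ simultaneously — a contradiction. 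The step I expect to be the main obstacle is exactly the passage from $bea=e$ to $\ran\pi_\lambda(e)\subseteq\mathcal{U}_\lambda$: this is where finite dimensionality is indispensable, through the unitary/completely-non-unitary splitting of a finite dimensional contraction, and it is the mechanism by which $\pi(b)=S^*$ being a non-unitary coisometry obstructs SOT approximation; the other point needing care, though routine, is the semi-invariance check that $\pi$ is an honest completely contractive representation.
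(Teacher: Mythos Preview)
Your proof is correct and follows essentially the same route as the paper: the same Toeplitz representation $\pi$ (justified by the same semi-invariance of $H^2\oplus H^2$), the same key identity $bea=e$ (which in the paper appears as $B^*XA=X$), and the same use of the unitary/completely-non-unitary splitting of a finite-dimensional contraction to force the range of $\pi_\lambda(e)$ into the unitary part of $\pi_\lambda(b)$.

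The packaging of the contradiction differs slightly. The paper decomposes \emph{both} $\pi_\lambda(a)$ and $\pi_\lambda(b)$, obtains a reducing splitting of the whole representation into ``unitary type'' and ``diagonal type'', and then exhibits a $*$-polynomial identity (your equation amounts to $(\pi_\lambda(q_2)-\pi_\lambda(b)^*\pi_\lambda(b))\pi_\lambda(e)=0$) that is satisfied by every finite-dimensional representation, is point-SOT closed, but fails for $\pi$. Your argument is leaner: you only decompose $\pi_\lambda(b)$, and instead of isolating a closed relation you reach the contradiction directly by comparing $\|\pi_\lambda(e)\xi\|$ with $\|\pi_\lambda(be)\xi\|$ at the single vector $\xi=(1,0)$. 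Your version is shorter; the paper's version has the minor conceptual advantage of making the obstruction visible as an explicit SOT-closed condition on representations. One cosmetic point: the symbol $q$ in ``$\pi_\lambda(q)(H^2\oplus H^2)$'' is never defined; presumably you mean the idempotent $\left[\begin{smallmatrix}0&0\\0&1\end{smallmatrix}\right]$ (or simply $1$), and you should also remark that $B_\lambda$ vanishes off this finite-dimensional range so that the unitary/c.n.u.\ splitting and the conclusion $\|B_\lambda^{*n}\zeta\|\to 0$ for $\zeta\perp\mathcal U_\lambda$ make sense inside the infinite-dimensional ambient space $H^2\oplus H^2$.
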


In fact, the representation $\pi$ will take a very concrete form given by Toeplitz operators
on the Hardy space $H^2$, namely
\begin{equation*}
  \pi: \mathcal{B} \to B(H^2 \oplus H^2), \quad
  \begin{bmatrix}
    {f} & 0 \\
    h & \overline{g}
  \end{bmatrix}
  \mapsto
  \begin{bmatrix}
    T_{{f}} & 0 \\
    T_h & T_{\overline{g}}
  \end{bmatrix}.
\end{equation*}
The algebra $\mathcal{B}$ in Theorem \ref{thm:not_approx_intro} is not only RFD,
but it is (completely isometrically) $2$-subhomogeneous, meaning that
it suffices to consider the supremum over all Hilbert spaces of dimension at most $2$ in \eqref{eqn:norm};
see \cite{AHM+20a} for background on this notion.

The question of Clou\^atre and Dor-On on SOT-$*$ approximations was motivated by the following considerations.
Given a non-self-adjoint unital operator algebra $\mathcal{A}$,
there are in general many $C^*$-algebras that are generated by a copy of $\mathcal{A}$.
In particular, there is minimal one, called the $C^*$-envelope, and a maximal one, called
the maximal $C^*$-algebra and denoted by $C^*_{\max}(\mathcal{A})$.
The maximal $C^*$-algebra is characterized by the following universal property:
There exists a unital completely isometric homomorphism $\iota: \mathcal{A} \to C^*_{\max}(\mathcal{A})$
such that $C^*_{\max}(\mathcal{A})$ is generated by $\iota(\mathcal{A})$ as a $C^*$-algebra,
and for every representation $\pi: \mathcal{A} \to B(\mathcal{H})$,
there exists a $*$-homomorphism $\sigma: C^*_{\max}(\mathcal{A}) \to B(\mathcal{H})$
such that $\sigma \circ \iota = \pi$, i.e.\ so that the following diagram commutes:

\begin{center}
\begin{tikzcd}
  & C^*_{\max}(\mathcal{A}) \arrow[d,dashed,"\sigma"] \\
  \mathcal{A} \arrow[ur,"\iota"] \arrow[r,"\pi"] & B(\mathcal{H})
\end{tikzcd}
\end{center}
Background material on the maximal $C^*$-algebra can be found in \cite{Blecher99} and \cite[Section 2.4]{BL04}.
Since residual finite dimensionality is much better understood in the $C^*$-context, it is natural
to try to link residual finite dimensionality of $\mathcal{A}$ to residual finite dimensionality
of a canonical $C^*$-algebra associated with $\mathcal{A}$.

Now, there are simple and natural examples of RFD operator algebras whose $C^*$-envelope
is not RFD. For instance, the $C^*$-envelope of the algebra of all bounded upper triangular operators on $\ell^2$ is $B(\ell^2)$, which has no finite dimensional representations.
Another example is given by Arveson's algebra $\mathcal{A}_d$, which is the multiplier norm
closure of the polynomials on the Drury--Arveson space. It plays a central role in multivariable operator theory; see \cite{Arveson98} for background.
This algebra is RFD, being an algebra of multipliers on a reproducing kernel Hilbert space.
But if $d \ge 2$, then the $C^*$-envelope of $\mathcal{A}_d$ is the Toeplitz $C^*$-algebra,
which contains the algebra of all compact operators and hence is not RFD; see \cite[Theorem 8.15]{Arveson98}.
Clou\^atre and Ramsey even constructed a finite dimensional unital operator algebra whose
$C^*$-envelope is not RFD; see \cite[Section 6, Example 4]{CR19}.

Thus, attention shifted to $C^*_{\max}$, and Clou\^atre and Ramsey \cite{CR19} asked
if $C^*_{\max}(\mathcal{A})$ is RFD whenever $\mathcal{A}$ is RFD.
Since every representation of $\mathcal{A}$ extends to a representation of $C^*_{\max}(\mathcal{A})$ by the universal
property, $C^*_{\max}(\mathcal{A})$ has many finite dimensional representations whenever $\mathcal{A}$ is RFD.
This question was further studied by Clou\^atre and Dor-On \cite[Question 1]{CD21},
who observed that $C^*_{\max}(\mathcal{A})$ is RFD if and only if every representation
of $\mathcal{A}$ can be approximated point SOT-$*$ by finite dimensional ones; see \cite[Theorem 3.3]{CD21}.
Indeed, necessity, which is what we will use, is immediate from the universal property
of $C^*_{\max}(\mathcal{A})$ and the Exel--Loring theorem (Theorem \ref{thm:EL_intro}).

Therefore, Theorem \ref{thm:not_approx_intro} has the following consequence.

\begin{corollary}
  Let $\mathcal{B}$ be the operator algebra of Theorem \ref{thm:not_approx_intro}.
  Then $\mathcal{B}$ is RFD, but $C^*_{\max}(\mathcal{B})$ is not RFD.
\end{corollary}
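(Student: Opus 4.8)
The first assertion, that $\mathcal{B}$ is RFD, is precisely part (a) of Theorem \ref{thm:not_approx_intro}, so nothing needs to be done there. For the second assertion the plan is to argue by contradiction, using the universal property of $C^*_{\max}$ together with the classical Exel--Loring theorem (Theorem \ref{thm:EL_intro}) applied to the $C^*$-algebra $C^*_{\max}(\mathcal{B})$, exactly as in the necessity direction of \cite[Theorem 3.3]{CD21}.

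Concretely, suppose for contradiction that $C^*_{\max}(\mathcal{B})$ is RFD. Let $\pi : \mathcal{B} \to B(\mathcal{H})$ be the unital representation furnished by Theorem \ref{thm:not_approx_intro}(b), and let $\iota : \mathcal{B} \to C^*_{\max}(\mathcal{B})$ be the canonical completely isometric embedding. By the universal property of $C^*_{\max}(\mathcal{B})$ there is a $*$-homomorphism $\sigma : C^*_{\max}(\mathcal{B}) \to B(\mathcal{H})$ with $\sigma \circ \iota = \pi$. Since $C^*_{\max}(\mathcal{B})$ is a unital RFD $C^*$-algebra, the implication (i) $\Rightarrow$ (iii) of Theorem \ref{thm:EL_intro} applied to $\sigma$ yields a net $(\sigma_\lambda)$ of finite dimensional representations of $C^*_{\max}(\mathcal{B})$ with $\sigma_\lambda(c) \to \sigma(c)$ in SOT for every $c$. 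I would then set $\pi_\lambda := \sigma_\lambda \circ \iota$; each $\pi_\lambda$ is a completely contractive homomorphism of $\mathcal{B}$, and because $\iota(\mathcal{B})$ generates $C^*_{\max}(\mathcal{B})$ as a $C^*$-algebra one has $C^*(\pi_\lambda(\mathcal{B})) \subseteq \sigma_\lambda(C^*_{\max}(\mathcal{B}))$, so the essential space of $\pi_\lambda$ is contained in that of $\sigma_\lambda$ and hence is finite dimensional. Thus each $\pi_\lambda$ is a finite dimensional representation of $\mathcal{B}$, and $\pi_\lambda(b) = \sigma_\lambda(\iota(b)) \to \sigma(\iota(b)) = \pi(b)$ in SOT for all $b \in \mathcal{B}$. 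This exhibits $\pi$ as a point-SOT limit of finite dimensional representations of $\mathcal{B}$, contradicting Theorem \ref{thm:not_approx_intro}(b).

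Every step here beyond invoking Theorem \ref{thm:not_approx_intro} is routine bookkeeping with the universal property and with the definition of a finite dimensional representation, so there is no genuine obstacle: the whole content of the corollary is supplied by Theorem \ref{thm:not_approx_intro}, whose proof is the substantive work of Section \ref{sec:SOT}. (If one prefers, the passage from $C^*_{\max}(\mathcal{B})$ RFD to point-SOT approximability of representations of $\mathcal{B}$ can simply be quoted from \cite[Theorem 3.3]{CD21} rather than reproduced, since that result packages exactly the argument above, even giving SOT-$*$ approximation.)
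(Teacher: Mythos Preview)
Your proposal is correct and follows exactly the approach indicated in the paper: the corollary is deduced from Theorem \ref{thm:not_approx_intro} via the universal property of $C^*_{\max}$ and the Exel--Loring theorem, which is precisely the necessity direction of \cite[Theorem 3.3]{CD21} that the paper invokes. You have simply unpacked that citation explicitly, and your bookkeeping (in particular the verification that $\pi_\lambda = \sigma_\lambda \circ \iota$ is finite dimensional) is correct.
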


However, since $\mathcal{B}$ is $2$-subhomogeneous, \cite[Proposition 4.1]{AHM+20a}
shows that the $C^*$-envelope of $\mathcal{B}$
is $2$-subhomogeneous as well and hence RFD.
Indeed, the $C^*$-envelope of $\mathcal{B}$ is $M_2(C(\mathbb{T}))$  since $\mathcal{B}$ is a Dirichlet
algebra; see Remark \ref{rem:extension} and \cite[Proposition 4.3.10]{BL04}.
Now, if $\mathcal{A}$ is any unital RFD operator algebra whose $C^*$-envelope is not RFD, then the direct sum $\mathcal{A} \oplus \mathcal{B}$ is a unital operator algebra with the property that neither the $C^*$-envelope
nor the maximal $C^*$-algebra is RFD. This is because the $C^*$-envelope of a direct sum is the direct sum
of the $C^*$-envelopes, and similarly for the maximal $C^*$-algebra.

\section{A non-self-adjoint Exel--Loring theorem}
\label{sec:EL}

Recall from the introduction that the matrix state space of a unital operator algebra $\mathcal{A}$
is the collection $S(\mathcal{A}) = (S_n(\mathcal{A}))_{n=1}^\infty$ of sets
\begin{equation*}
  S_n(\mathcal{A})  = \{ \varphi: \mathcal{A} \to M_n: \varphi \text{ is linear and u.c.c.}\}.
\end{equation*}
We identify each $S_n(\mathcal{A})$ with a subspace of $M_n(\mathcal{A}')$,
where $\mathcal{A}'$ is the dual space of $\mathcal{A}$.
Then $S(\mathcal{A})$ is a matrix convex set in $\mathcal{A}'$,
meaning that whenever $\varphi_{j} \in S_{n_j}(\mathcal{A})$
and $\alpha_{j} \in M_{n_j,n}(\mathbb{C})$ for $j=1,\ldots,r$ satisfy $\sum_{j=1}^n \alpha_j^* \alpha_j = I_n$,
then $\sum_{j=1}^n \alpha_j^* \varphi_j \alpha_j \in S_n(\mathcal{A})$.
Background on matrix convexity can be found in \cite{Effros1997,WW99}.
We equip $M_n(\mathcal{A}')$ with the product topology of the weak-$*$ topology on $\mathcal{A}'$.
Then each $\mathcal{S}_n(\mathcal{A})$ is compact.

The following result is a slight refinement of Theorem \ref{thm:RFD_char_intro} from the introduction.

\begin{theorem}
  \label{T:EL_nsa}
  Let $\mathcal{A}$ be a unital operator algebra. The following are equivalent:
  \begin{enumerate}[label=\normalfont{(\roman*)}]
    \item $\mathcal{A}$ is RFD;
    \item the finite dimensional matrix states form a weak-$*$ dense subset of the matrix state space $S(\mathcal{A})$;
    \item for every unital representation $\pi$ of $\mathcal{A}$ on $\mathcal{H}$, there exist a net $(\pi_\lambda)$ of finite dimensional representations
      on $\mathcal{H}$ and an increasing net $(P_\lambda)$ of finite rank orthogonal projections
      so that $\pi(a) = \lim_\lambda P_\lambda \pi_\lambda(a) P_\lambda$ in the SOT-$*$ topology
      for all $a \in \mathcal{A}$.
    \item for every unital representation $\pi: \mathcal{A} \to B(\mathcal{H})$, there exists a net $(\pi_\lambda)$
      of finite dimensional representations such that $(\pi_\lambda(a))$ converges to $\pi(a)$ in WOT for all $a \in \mathcal{A}$.
  \end{enumerate}
\end{theorem}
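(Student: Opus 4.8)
The plan is to prove the cycle of implications (i) $\Rightarrow$ (iii) $\Rightarrow$ (iv) $\Rightarrow$ (ii) $\Rightarrow$ (i). The implications (iii) $\Rightarrow$ (iv) is essentially trivial (compress the net $(\pi_\lambda)$ by $(P_\lambda)$ and note that SOT-$*$ convergence with the projections increasing to $I$ gives WOT convergence of $P_\lambda \pi_\lambda(a) P_\lambda$ to $\pi(a)$, and a finite-rank compression of a finite dimensional representation is still realized inside a finite dimensional representation after dilating back up via Arveson/Stinespring). The implication (ii) $\Rightarrow$ (i) is a matrix-state version of the standard duality argument: given $a \in M_n(\mathcal{A})$, the norm $\|a\|$ is computed by the matrix state space, i.e.\ $\|a\| = \sup\{\|\varphi^{(n)}(a)\| : \varphi \in S_m(\mathcal{A}), m \in \mathbb{N}\}$ (this is a consequence of the fact that $S(\mathcal{A})$ recovers the operator space structure of $\mathcal{A}$, via the matrix-convex bipolar theorem / Webster--Winkler or Effros--Winkler duality); since each such supremum is weak-$*$ continuous in $\varphi$ and finite dimensional matrix states dilate to finite dimensional representations, density gives \eqref{eqn:norm}.

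The main work is (i) $\Rightarrow$ (iii), and this is where I expect the real obstacle. Start with a faithful, hence completely isometric, finite dimensional representation $\rho: \mathcal{A} \to \prod_{\lambda} B(\mathcal{H}_\lambda)$ with each $\mathcal{H}_\lambda$ finite dimensional; the finite partial sums $\bigoplus_{\lambda \in F} \rho_\lambda$ are then finite dimensional representations whose joint matrix norms approximate those of $\mathcal{A}$. Now let $\pi: \mathcal{A} \to B(\mathcal{H})$ be an arbitrary unital representation. The key point is a reduction to the separable, cyclic case and then a direct construction: decompose $\mathcal{H}$ into a direct sum of $\pi$-cyclic subspaces, and for a single cyclic vector $x$ one needs to produce finite dimensional representations $\pi_\lambda$ on $\mathcal{H}$ and finite rank projections $P_\lambda \nearrow I$ with $P_\lambda \pi_\lambda(a) P_\lambda \to \pi(a)$ in SOT-$*$. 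Here is where the matrix state space enters: for a finite set $a_1,\dots,a_k \in \mathcal{A}$ and a finite dimensional subspace $\mathcal{M} \subset \mathcal{H}$ with orthogonal projection $Q$, the compression $a \mapsto Q\pi(a)|_{\mathcal{M}}$ is (a corner of) a matrix state, so by a standard compactness/approximate-matrix-state argument — combined with the fact, provable from residual finite dimensionality, that matrix states of $\mathcal{A}$ are approximable by finite dimensional matrix states in a quantitative way on finite subsets — one can find a finite dimensional representation $\sigma$ on some finite dimensional space $\mathcal{K}$ and an isometry $w: \mathcal{M} \to \mathcal{K}$ with $w^* \sigma(a_i) w \approx Q\pi(a_i)|_{\mathcal{M}}$ for all $i$. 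The technical heart is then to glue these local models into a single net of finite dimensional representations on the fixed Hilbert space $\mathcal{H}$: one builds $\pi_\lambda$ by transporting $\sigma$ back onto $\mathcal{M} \subset \mathcal{H}$ via $w$ (extending by, say, a fixed unital finite dimensional representation on $\mathcal{M}^\perp \cap \mathcal{H}$, using that $\mathcal{A}$ has at least one finite dimensional representation since it is RFD — e.g.\ a character or the restriction of $\rho$), sets $P_\lambda = Q$, and indexes by the directed set of pairs (finite subset of $\mathcal{A}$, finite dimensional subspace of $\mathcal{H}$) ordered by inclusion with tolerance $\to 0$. The SOT-$*$ convergence $P_\lambda \pi_\lambda(a) P_\lambda \to \pi(a)$ then follows because for $x \in \mathcal{M}$ one has $P_\lambda \pi_\lambda(a) P_\lambda x = Q \pi(a)|_{\mathcal{M}} x + (\text{error}) \to \pi(a) x$, using $Q \to I$ strongly, and similarly for adjoints.

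The step I expect to be the main obstacle is the quantitative approximation of an arbitrary matrix state by a finite dimensional one, uniformly enough on finite subsets to run the gluing: residual finite dimensionality as stated gives that finite dimensional \emph{representations} compute the matrix norms of $\mathcal{A}$, and one must upgrade this to the statement that the finite dimensional matrix states are dense in $S(\mathcal{A})$ with good control — essentially proving (i) $\Rightarrow$ (ii) first, and then using Arveson extension plus Stinespring to realize an approximating finite dimensional matrix state as a corner $w^*\sigma(\cdot)w$ of a genuine finite dimensional representation $\sigma$ on a space whose dimension we do not control but do not need to. Concretely I would first prove (i) $\Rightarrow$ (ii) via the bipolar theorem for matrix convex sets: the finite dimensional matrix states generate, under matrix convex combinations and weak-$*$ closure, a matrix convex set whose support functionals are exactly the matrix norms computed by finite dimensional representations, which by (i) equal the matrix norms of $\mathcal{A}$, whence this matrix convex set is all of $S(\mathcal{A})$; then (ii) $\Rightarrow$ (iii) by the local-model-and-glue construction sketched above, and finally (iii) $\Rightarrow$ (iv) $\Rightarrow$ (ii) $\Rightarrow$ (i) to close the loop, rearranging the order of the implications accordingly.
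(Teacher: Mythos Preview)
Your final architecture---prove (i) $\Rightarrow$ (ii) by matrix-convex separation (Effros--Winkler), then (ii) $\Rightarrow$ (iii) by compressing $\pi$ to finite-rank projections, approximating the resulting matrix states by finite dimensional ones, and dilating those to finite dimensional representations embedded in $\mathcal{H}$---is exactly the paper's argument. Two places where your write-up drifts from what actually needs to happen:

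\textbf{(iii) $\Rightarrow$ (iv).} You describe compressing $\pi_\lambda$ by $P_\lambda$ and then re-dilating via Arveson/Stinespring. That is backwards and unnecessary: the $\pi_\lambda$ in (iii) are \emph{already} finite dimensional representations on $\mathcal{H}$, and one simply checks that $\pi_\lambda(a) - P_\lambda \pi_\lambda(a) P_\lambda = (I - P_\lambda)\pi_\lambda(a) + P_\lambda \pi_\lambda(a)(I - P_\lambda) \to 0$ in WOT, using $\|\pi_\lambda(a)\| \le \|a\|$ and $P_\lambda \to I$ in SOT (the latter follows by evaluating (iii) at $a = 1$). Compressing and re-dilating would not give you control over the resulting representation.

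\textbf{The ``main obstacle'' in (ii) $\Rightarrow$ (iii) is not an obstacle.} Weak-$*$ density of finite dimensional matrix states in $S_n(\mathcal{A})$ already \emph{is} the quantitative statement you want: since the target $M_n$ is finite dimensional, weak-$*$ convergence of $\varphi_\alpha \to \varphi$ in $S_n(\mathcal{A})$ means $\|\varphi_\alpha(a) - \varphi(a)\| \to 0$ for each $a$, hence for any finite $F \subset \mathcal{A}$ and $\varepsilon > 0$ there is a finite dimensional matrix state $\psi$ with $\|\psi(a) - \varphi(a)\| < \varepsilon$ for all $a \in F$. No further upgrade is needed, and the cyclic decomposition and gluing you sketch are unnecessary---just take any increasing net of finite-rank projections $P_\lambda \nearrow I$, approximate each matrix state $a \mapsto P_\lambda \pi(a) P_\lambda$ on a finite set to within $1/k$, dilate, and index by $(F,\lambda,k)$.
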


\begin{proof}
  (i) $\Rightarrow$ (ii)
  The proof is essentially an adaptation of the proof of Exel and Loring to the matrix convex setting.
  A very similar argument in the $C^*$-setting already appeared in the first arXiv version of \cite{HL19}.

  Let $F(\mathcal{A}) = (F_n)_{n=1}^\infty$
  denote the collection of all finite dimensional matrix states of $\mathcal{A}$.
  It is a simple matter to verify that $F(\mathcal{A})$ is a matrix convex subset
  of $S(\mathcal{A}) = (S_n(\mathcal{A}))_{n=1}^\infty$.
  Indeed, let $\varphi_j \in F_{n_j}$ and $\alpha_j \in M_{n_j,n}$
  for $j=1,\ldots,r$ satisfy $\sum_{j=1}^n \alpha_j^* \alpha_j = I_n$.
  For each $j$, the map $\varphi_j: \mathcal{A} \to M_{n_j}$ is a finite dimensional matrix state,
  so there exist an isometry $w_j: \mathbb{C}^{n_j} \to \mathcal{H}_j$ and a unital representation
  $\pi_j: \mathcal{A} \to B(\mathcal{H}_j)$, with $\dim(\mathcal{H}_j) < \infty$, so that
  \begin{equation*}
    \varphi_j(a) = w_j^* \pi_j(a) w_j \quad \text{ for all } a \in \mathcal{A}.
  \end{equation*}
  Then the identity
  \begin{equation*}
    \sum_{j=1}^r \alpha_j^* \varphi_j(a) \alpha_j
    =
    \begin{bmatrix}
      \alpha_1^* w_1^* & \cdots & \alpha_r^* w_r^*
    \end{bmatrix}
    \begin{bmatrix}
      \pi_1(a) & 0 & \cdots & 0 \\
      0 & \pi_2(a) & \cdots & 0 \\
      \vdots & \ddots & \ddots & \vdots \\
      0 & 0 & \cdots & \pi_n(a)
    \end{bmatrix}
    \begin{bmatrix}
      w_1 \alpha_1 \\ \vdots \\ w_r \alpha_r
    \end{bmatrix}
  \end{equation*}
  shows that $\sum_{j=1}^r \alpha_j^* \varphi_j \alpha_j \in F_n$.
  
  Assume towards a contradiction that $F(\mathcal{A})$ is not weak-$*$ dense in $S(\mathcal{A})$,
  so that there exist $n \ge 1$ and $\psi \in S_n(\mathcal{A}) \setminus \overline{F_n}^{w*}$.
  We will identify $M_n(\mathcal{A}) = M_n \otimes \mathcal{A}$ in the usual way.
  The separation theorem for matrix convex sets of Effros and Winkler \cite{Effros1997}
  (see \cite[Theorem 1.6]{WW99} for the statement in the form in which it is needed) yields
  a weak-$*$ continuous linear map $\Phi: \mathcal{A}' \to M_n$ and a self-adjoint matrix $\alpha \in M_n$
  such that
  \begin{equation*}
    \Re \Phi^{(r)} (\varphi) \le I_r \otimes \alpha \quad \text{ for all } r \in \mathbb{N}, \varphi \in F_r,
  \end{equation*}
  but
  \begin{equation*}
    \Re \Phi^{(n)}(\psi) \not \le I_n \otimes \alpha.
  \end{equation*}
  Identifying $\Phi$ with an element of $M_n(\mathcal{A}'')$,
  weak-$*$ continuity of $\Phi$ shows that it is given by an element $a \in M_n(\mathcal{A})$.
  Thus, under the canonical
  shuffle (see, for instance, \cite[Chapter 8]{Paulsen02}), the two conditions above translate to
  \begin{equation}
    \label{eqn:HB1}
    \Re \varphi^{(n)}(a) \le \alpha \otimes I_r \quad \text{ for all } r \in \mathbb{N}, \varphi \in F_r,
  \end{equation}
  but
  \begin{equation}
    \label{eqn:HB2}
    \Re \psi^{(n)}(a) \not \le \alpha \otimes I_n.
  \end{equation}

  We will
  show that \eqref{eqn:HB1} implies that $\Re a \le \alpha$ inside of $M_n(\mathcal{A} + \mathcal{A}^*)$.
  Indeed, if $\pi: \mathcal{A} \to M_r$ is a unital finite dimensional representation of $\mathcal{A}$,
  then in particular $\pi \in F_r$, hence
  \begin{equation}
    \label{eqn:HB3}
    \Re \pi^{(n)}(a) \le \alpha \otimes I_r = \pi^{(n)}(\alpha).
  \end{equation}
  Since $\mathcal{A}$ is unital and RFD, the direct sum over a suitable collection of unital finite
  dimensional representations yields a completely isometric representation $\Pi$ of $\mathcal{A}$,
  and $\Re \Pi^{(n)}(a) \le \Pi^{(n)}(\alpha)$ by \eqref{eqn:HB3}.
  Since $\Pi$ is a unital complete isometry, it extends to a unital complete order isomorphism
  between the operator systems $\mathcal{A} + \mathcal{A}^*$ and $\Pi(\mathcal{A}) + \Pi(\mathcal{A})^*$;
  see \cite[Proposition 3.5]{Paulsen02}.
  Hence
  \begin{equation*}
    \Re a \le \alpha \in
    M_n(\mathcal{A} + \mathcal{A}^*).
  \end{equation*}

  On the other hand, the u.c.c.\ map $\psi: \mathcal{A} \to M_n$ extends to a u.c.p.\ map from $\mathcal{A} + \mathcal{A}^*$ into $M_n$, so
  \begin{equation*}
    \Re \psi^{(n)}(a) \le \psi^{(n)}(\alpha) = \alpha \otimes I_n,
  \end{equation*}
  contradicting \eqref{eqn:HB2}.

  (ii) $\Rightarrow$ (iii)
  Let $\pi: \mathcal{A} \to B(\mathcal{H})$ be a unital representation. Clearly, we may assume that $\mathcal{H}$ is infinite dimensional.
  Let $(P_\lambda)_{\lambda \in \Lambda}$ be an increasing net of finite rank projections converging to the identity in SOT.
  Then 
  \begin{equation}
    \label{eqn:1}
    \pi(a) = \lim_{\lambda} P_\lambda \pi(a) P_\lambda \text{ in SOT-$*$} \text{ for all } a \in \mathcal{A}.
  \end{equation}
  For each $\lambda$, we may regard $a \mapsto P_\lambda \pi(a) P_\lambda$ as a matrix state on $\mathcal{A}$.
  By assumption, there exists for every finite subset $F \subset A$, every $\lambda \in \Lambda$
  and every $k \in \mathbb{N}$
  a finite dimensional
  matrix state $\varphi_{F,\lambda,k}: \mathcal{A} \to B(P_\lambda \mathcal{H})$ so that
  \begin{equation*}
    \|P_\lambda \pi(a) P_\lambda - \varphi_{F,\lambda,k}(a) \| < \frac{1}{k}
  \end{equation*}
  for all $a \in F$. Each $\varphi_{F,\lambda,k}$ dilates to a finite dimensional representation $\pi_{F,\lambda,k}$ of $\mathcal{A}$.
  Since $\mathcal{H}$ is infinite dimensional, we may assume that the range of $\pi_{F,\lambda,k}$ is contained in $\mathcal{H}$.
  Thus,
  \begin{equation*}
    \|P_\lambda \pi(a) P_\lambda - P_\lambda \pi_{F,\lambda,k}(a) P_\lambda\| < \frac{1}{k}
  \end{equation*}
  for all $a \in F$. In combination with \eqref{eqn:1}, we see that the net $(P_\lambda \pi_{F,\lambda,k}(a) P_\lambda)$, indexed over $\mathcal{F}(\mathcal{A}) \times \Lambda \times \mathbb{N}$ in the product order,
  where $\mathcal{F}(\mathcal{A})$ denotes the set of all finite subsets of $\mathcal{A}$,
  converges
  to $\pi(a)$ in SOT-$*$ for all $a \in \mathcal{A}$.

  (iii) $\Rightarrow$ (iv) Let $\pi_\lambda$ and $P_\lambda$ be as in (iii).
  Then
  \begin{equation*}
    \pi_\lambda(a) - P_\lambda \pi_\lambda(a) P_\lambda
    = (I - P_\lambda)\pi_\lambda(a) + P_\lambda \pi_\lambda(a) (I - P_\lambda).
  \end{equation*}
  Since $\|\pi_\lambda(a)\| \le \|a\|$ and $(I - P_\lambda)$ converges to zero in SOT,
  both summands converge to zero in WOT. Hence, in the setting of (iii), we have
  $\pi(a) = \lim_\lambda \pi_\lambda(a)$ in WOT.

  (iv) $\Rightarrow$ (i)
  We apply (iv) to a completely isometric representation $\pi$ of $\mathcal{A}$.
  Since the operator norm is lower semi-continuous with respect to WOT, we see that for all $a \in M_n(\mathcal{A})$,
  we have
  \begin{equation*}
    \|a\| = \sup_{\lambda \in \Lambda} \|\pi_\lambda^{(n)}(a)\|,
  \end{equation*}
  hence $\mathcal{A}$ is RFD.
\end{proof}

The net $(\pi_\lambda)$ in part (iv) of Theorem \ref{T:EL_nsa} converges to $\pi$ in the BW topology, a frequently
studied topology in operator algebras; see for instance \cite[Chapter 7]{Paulsen02}.

\begin{remark}
    If $\mathcal{A}$ and $\mathcal{H}$ are separable, then the nets in Theorem \ref{T:EL_nsa} can be replaced with sequences.
      To see this, let $D \subset A$ be a countable dense subset. A straightforward modification of the proof of (ii) $\Rightarrow$ (iii) above
      yields an increasing sequence $(P_n)$ of orthogonal projections and a sequence $(\pi_n)$ of finite dimensional representations of $\mathcal{A}$
      such that
      \begin{equation*}
        \pi(a) = \lim_{n \to \infty} P_n \pi_n(a) P_n \quad \text{ in SOT-$*$}
      \end{equation*}
      for all $a \in D$. Since $\|\pi_n\| \le 1$ and $\|P_n\| \le 1$ for all $n$, we obtain convergence for all $a \in A$.
\end{remark}

\begin{remark}
  The implication (i) $\Rightarrow$ (ii) in Theorem \ref{T:EL_nsa} could alternatively be deduced from a corresponding
  result for $C^*$-algebras.
  According to \cite[Theorem 3.8]{ANT19}, the equivalence (i) $\Leftrightarrow$ (ii) in Theorem \ref{T:EL_nsa} holds
  if $\mathcal{A}$ is a unital $C^*$-algebra.
  Now, if $\mathcal{A}$ is a unital operator algebra that is RFD, then we may embed $\mathcal{A}$
  into a $C^*$-algebra of the form $\mathfrak{A} = \prod_{\lambda \in \Lambda} B(\mathcal{H}_\lambda)$,
  where each $\mathcal{H}_\lambda$ is finite dimensional. Then $\mathfrak{A}$ is a unital $C^*$-algebra
  that is RFD.
  By the Arveson extension theorem, each matrix state on $\mathcal{A}$ extends to a matrix state on $\mathfrak{A}$.
  Conversely, finite dimensional matrix states on $\mathfrak{A}$ restrict to finite dimensional matrix states on $\mathcal{A}$.
  Therefore, density of the finite dimensional matrix states on $\mathfrak{A}$ implies
  density of the finite dimensional matrix states on $\mathcal{A}$.
\end{remark}

We observe that the equivalence of (i), (iii) and (iv) in Theorem \ref{T:EL_nsa} continues to hold
in the non-unital setting. If $\mathcal{A}$ is an operator algebra, then a representation $\pi: \mathcal{A} \to B(\mathcal{H})$ is said to be finite dimensional if the closed linear span of $C^*(\pi(\mathcal{A})) \mathcal{H}$ is finite dimensional.
(This is stronger than merely assuming that the closed linear span of $\pi(\mathcal{A}) \mathcal{H}$ is finite dimensional in the non-unital setting, as simple examples of operator algebras with zero product show.)

\begin{corollary}
  \label{cor:non-unital}
  Let $\mathcal{A}$ be a (not necessarily unital) operator algebra. The following are equivalent:
  \begin{enumerate}[label=\normalfont{(\roman*)}]
    \item $\mathcal{A}$ is RFD;
    \item for every representation $\pi$ of $\mathcal{A}$ on $\mathcal{H}$, there exist a net $(\pi_\lambda)$ of finite dimensional representations
      on $\mathcal{H}$ and an increasing net $(P_\lambda)$ of finite rank orthogonal projections
      so that $\pi(a) = \lim_\lambda P_\lambda \pi_\lambda(a) P_\lambda$ in the SOT-$*$ topology
      for all $a \in \mathcal{A}$.
    \item for every representation $\pi: \mathcal{A} \to B(\mathcal{H})$, there exists a net $(\pi_\lambda)$
      of finite dimensional representations such that $(\pi_\lambda(a))$ converges to $\pi(a)$ in WOT for all $a \in \mathcal{A}$.
  \end{enumerate}
\end{corollary}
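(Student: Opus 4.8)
The plan is to reduce the non-unital statement to the unital Theorem~\ref{T:EL_nsa} by unitization, being careful about what "finite dimensional representation" means in the non-unital world (closed linear span of $C^*(\pi(\mathcal{A}))\mathcal{H}$ finite dimensional). The implications (ii)~$\Rightarrow$~(iii) and (iii)~$\Rightarrow$~(i) are the easy directions and go through essentially verbatim as in the unital proof: for (ii)~$\Rightarrow$~(iii) one writes $\pi_\lambda(a) - P_\lambda \pi_\lambda(a) P_\lambda = (I-P_\lambda)\pi_\lambda(a) + P_\lambda \pi_\lambda(a)(I - P_\lambda)$ and uses $\|\pi_\lambda(a)\| \le \|a\|$ together with $(I - P_\lambda) \to 0$ in SOT; for (iii)~$\Rightarrow$~(i) one applies (iii) to a completely isometric representation and invokes WOT-lower-semicontinuity of the norm on each matrix level. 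So the real content is (i)~$\Rightarrow$~(ii).

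For (i)~$\Rightarrow$~(ii), let $\mathcal{A}$ be RFD and let $\mathcal{A}^1$ denote its unitization, which I would realize concretely: fix a completely isometric representation $\mathcal{A} \subset B(\mathcal{K})$ and set $\mathcal{A}^1 = \mathcal{A} + \mathbb{C} I_{\mathcal{K}} \subset B(\mathcal{K})$. The first step is to check that $\mathcal{A}^1$ is again RFD. This follows because $\mathcal{A}$ RFD gives a family $\{\mathcal{H}_\mu\}$ of finite dimensional spaces and a completely isometric $\rho: \mathcal{A} \to \prod_\mu B(\mathcal{H}_\mu)$; each coordinate $\rho_\mu$ extends to a unital completely contractive (indeed completely isometric on the relevant operator system, but unital c.c.\ is all we need) map $\mathcal{A}^1 \to B(\mathcal{H}_\mu)$ by sending $I \mapsto I_{\mathcal{H}_\mu}$, and a standard argument (e.g.\ via the fact that unitization is well-behaved on operator algebras, cf.\ the discussion of unitization in \cite{BL04}) shows the resulting map on $\mathcal{A}^1$ is completely isometric; a direct sum over $\mu$ then exhibits $\mathcal{A}^1$ as RFD. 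Now apply the unital Theorem~\ref{T:EL_nsa}, implication (i)~$\Rightarrow$~(iii): given a representation $\pi: \mathcal{A} \to B(\mathcal{H})$, first extend it to a unital representation $\pi^1: \mathcal{A}^1 \to B(\mathcal{H})$ on the same Hilbert space (possible: $\pi$ completely contractive on the operator algebra $\mathcal{A}$ extends to a unital completely contractive $\pi^1$ on $\mathcal{A}^1$ with $\pi^1(I) = I_\mathcal{H}$, since the essential space issue is absorbed by declaring $\pi^1(I)$ to be the ambient identity). Theorem~\ref{T:EL_nsa}(iii) produces finite dimensional representations $\sigma_\lambda: \mathcal{A}^1 \to B(\mathcal{H})$ and finite rank projections $P_\lambda$ with $\pi^1(a) = \lim_\lambda P_\lambda \sigma_\lambda(a) P_\lambda$ in SOT-$*$ for all $a \in \mathcal{A}^1$, in particular for all $a \in \mathcal{A}$. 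Restricting, $\pi_\lambda := \sigma_\lambda|_{\mathcal{A}}$ does the job, provided each $\pi_\lambda$ is a finite dimensional representation of $\mathcal{A}$ in the required sense.

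The main obstacle is exactly this last point: verifying that $\pi_\lambda = \sigma_\lambda|_{\mathcal{A}}$ is finite dimensional as a representation of $\mathcal{A}$, i.e.\ that $C^*(\pi_\lambda(\mathcal{A}))\mathcal{H}$ has finite dimensional closed span. This is where the stronger non-unital notion matters and where the parenthetical warning in the text (about algebras with zero product) is relevant. The resolution is that $\sigma_\lambda$ is a finite dimensional representation of the \emph{unital} algebra $\mathcal{A}^1$, meaning $\sigma_\lambda(I)\mathcal{H} = P_\lambda' \mathcal{H}$ is finite dimensional for the projection $P_\lambda' = \sigma_\lambda(I)$; since $\sigma_\lambda(\mathcal{A}) \subset \sigma_\lambda(\mathcal{A}^1) = P_\lambda' \sigma_\lambda(\mathcal{A}^1) P_\lambda' \subset B(P_\lambda'\mathcal{H})$, and hence $C^*(\pi_\lambda(\mathcal{A})) \subset B(P_\lambda'\mathcal{H})$ with $P_\lambda'\mathcal{H}$ finite dimensional, the essential space of $\pi_\lambda$ is contained in the finite dimensional space $P_\lambda'\mathcal{H}$. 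Therefore $\pi_\lambda$ is indeed a finite dimensional representation of $\mathcal{A}$, completing (i)~$\Rightarrow$~(ii). One should also double-check the harmless point that we may arrange $\sigma_\lambda$ to act on the original $\mathcal{H}$ rather than a subspace, exactly as in the proof of Theorem~\ref{T:EL_nsa}, using that $\mathcal{H}$ may be assumed infinite dimensional (if $\mathcal{H}$ is already finite dimensional, the statement is trivial by taking the constant net).
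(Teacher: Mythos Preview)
Your proposal is correct and follows essentially the same route as the paper: reduce to the unital Theorem~\ref{T:EL_nsa} via unitization, using that $\mathcal{A}$ RFD implies $\mathcal{A}^1$ RFD and that representations of $\mathcal{A}$ extend to unital representations of $\mathcal{A}^1$. The paper handles these two facts by citing Meyer's unitization theorem and the universal property (\cite[Corollary~2.1.15 and Theorem~2.1.13]{BL04}), whereas you sketch them concretely; your explicit check that $\sigma_\lambda|_{\mathcal{A}}$ is finite dimensional (via $C^*(\pi_\lambda(\mathcal{A})) \subset B(\sigma_\lambda(I)\mathcal{H})$) spells out what the paper simply asserts in the sentence ``finite dimensional representations of $\mathcal{A}^1$ restrict to finite dimensional representations of $\mathcal{A}$.''
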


\begin{proof}
  (i) $\Rightarrow$ (ii)
  It is a theorem of Meyer \cite{Meyer01} that every operator algebra $\mathcal{A}$ admits
  a unitization ${\mathcal{A}^1}$; see also \cite[Section 2.1]{BL04}.
  If $\mathcal{A}$ is RFD, then $\mathcal{A}$ embeds completely isometrically into an algebra of the form $\prod_{\lambda \in \Lambda} B(\mathcal{H}_\lambda)$, where each $\mathcal{H}_\lambda$ is finite dimensional.
  By the universal property of the unitization (see \cite[Corollary 2.1.15]{BL04}), this embedding extends to a unital completely isometric homomorphism ${A}^1 \to \prod_{\lambda \in \Lambda} B(\mathcal{H}_\lambda)$; thus
  ${\mathcal{A}}^1$ is RFD as well.
  Every representation of $\mathcal{A}$ extends to a unital representation of $\mathcal{A}^1$ (see \cite[Theorem 2.1.13]{BL04}). Moreover, finite dimensional representations of $\mathcal{A}^1$ restrict to finite dimensional representations of $\mathcal{A}$. Thus, the implication (i) $\Rightarrow$ (iii) of Theorem \ref{T:EL_nsa},
  applied to the unitization $\mathcal{A}^1$, yields the approximation statement in the non-unital case.
  
  The implications (ii) $\Rightarrow$ (iii) $\Rightarrow$ (i) are immediate, just as in the unital setting.
\end{proof}

\section{Density of finite dimensional states}
\label{sec:state_space}

Whereas the main focus of this article lies on finite dimensional approximations of representations, it is natural to ask
what happens if we replace the matrix state space with the ordinary state space in Theorem \ref{thm:RFD_char_intro}.
That is, which (non-self-adjoint) operator algebras have the property that the set of finite dimensional states is weak-$*$ dense
in the state space? A characterization is given by the following result.

\begin{proposition}
  \label{prop:state_space}
  The following assertions are equivalent for a unital operator algebra $\mathcal{A}$:
  \begin{enumerate}[label=\normalfont{(\roman*)}]
    \item The set of finite dimensional states is weak-$*$ dense in the state space $S_1(\mathcal{A})$;
    \item for every $a \in \mathcal{A}$, we have
      \begin{equation*}
      \|a\| = \sup \{ \|\pi(a)\| \},
      \end{equation*}
      where the supremum is taken over all completely contractive homomorphisms $\pi: \mathcal{A} \to B(\mathcal{H})$ with $\dim(\mathcal{H}) < \infty$.
  \end{enumerate}
\end{proposition}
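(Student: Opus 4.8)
The plan is to prove the two implications separately, with the direction (ii) $\Rightarrow$ (i) being the substantive one and (i) $\Rightarrow$ (ii) being a routine lower-semicontinuity argument. For (i) $\Rightarrow$ (ii): fix $a \in \mathcal{A}$ and a completely isometric unital representation $\rho: \mathcal{A} \to B(\mathcal{K})$. Then $\|a\| = \|\rho(a)\|$, and by standard Hilbert space geometry $\|\rho(a)\| = \sup \{ |\langle \rho(a) x, y\rangle| : \|x\|, \|y\| \le 1 \}$. A polarization/$2\times 2$ amplification trick lets one replace the bilinear form by the state $b \mapsto \langle \rho(b) x, x\rangle$ applied to a $2 \times 2$ matrix built from $a$; more directly, one uses that $\|\rho(a)\| = \sup_{\|x\|=1}\|\rho(a)x\| = \sup_{\|x\|=1}(\langle \rho(a^*a')\text{-type expression})$, but since $\mathcal{A}$ need not be self-adjoint one should instead pass to the operator system $\mathcal{A}+\mathcal{A}^*$ and observe that for a state $\varphi$ on $\mathcal{A}$ extending to a state $\tilde\varphi$ on $\mathcal{A}+\mathcal{A}^*$, the GNS data recovers the matrix coefficient. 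The cleanest route: note $\|a\| = \sup \{ \Re \varphi(a) : \varphi \in S_1(\mathcal{A}+\mathcal{A}^*),\ ?\}$ is \emph{not} quite right for non-self-adjoint $a$, so instead use that $\|a\| \le 1$ iff $\begin{bmatrix} I & a \\ a^* & I\end{bmatrix} \ge 0$ in $M_2(\mathcal{A}+\mathcal{A}^*)$, iff $\psi\left(\begin{bmatrix} I & a \\ a^* & I\end{bmatrix}\right) \ge 0$ for every state $\psi$ on $M_2(\mathcal{A}+\mathcal{A}^*)$; then each such $\psi$ restricts to a matrix-coefficient functional coming from a state on $\mathcal{A}$, and density of finite dimensional states plus weak-$*$ continuity of evaluation at the fixed element $\begin{bmatrix} I & a \\ a^* & I\end{bmatrix}$ transfers positivity. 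This yields $\|a\| = \sup_\varphi$ over finite dimensional states, and each finite dimensional state gives a finite dimensional representation of the required type.

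For (ii) $\Rightarrow$ (i), I would argue by contradiction using Hahn--Banach separation in its ordinary (non-matricial) form, paralleling the proof of (i) $\Rightarrow$ (ii) of Theorem~\ref{T:EL_nsa} but with $n=1$. Suppose some $\psi \in S_1(\mathcal{A})$ is not in the weak-$*$ closure of the set $F_1$ of finite dimensional states. One checks readily that $F_1$ is convex (direct sums of dilations), and compact sets can be separated from points: there is a weak-$*$ continuous real-linear functional on $\mathcal{A}'$, i.e.\ an element $a \in \mathcal{A}$, and a real scalar $t$ with $\Re \varphi(a) \le t$ for all $\varphi \in F_1$ but $\Re \psi(a) > t$. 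Since $F_1$ contains all states arising from finite dimensional \emph{representations} (vector states $x \mapsto \langle \pi(\cdot)x,x\rangle$), taking the direct sum $\Pi$ of all unital finite dimensional representations — which by hypothesis (ii) is \emph{isometric} on $\mathcal{A}$, hence (being unital and isometric on $\mathcal{A}$) extends to a unital complete order embedding of $\mathcal{A}+\mathcal{A}^*$ — we get $\Re a \le t\cdot 1$ inside $\mathcal{A}+\mathcal{A}^*$: indeed $\Re\langle \Pi(a)x,x\rangle \le t$ for all unit vectors $x$ forces $\Re\Pi(a) \le t I$, and the order embedding pulls this back. But then any state $\tilde\psi$ on $\mathcal{A}+\mathcal{A}^*$ extending $\psi$ satisfies $\Re\psi(a) = \tilde\psi(\Re a) \le t$, a contradiction.

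The main obstacle, and the point requiring care, is the passage from hypothesis (ii) — which only gives an \emph{isometric} (not a priori completely isometric) unital homomorphism $\Pi$ into a finite-dimensional-block algebra — to the conclusion that $\Pi$ induces a \emph{unital complete order embedding} on the operator system $\mathcal{A}+\mathcal{A}^*$. For a unital \emph{completely} isometric homomorphism this is \cite[Proposition 3.5]{Paulsen02}, exactly as used in Theorem~\ref{T:EL_nsa}; but (ii) is only the $n=1$ statement, so $\Pi$ need only be isometric, and an isometric unital map between operator systems need not be a complete order isomorphism. I would resolve this by working at the level of the single element: rather than invoking a global order embedding, observe that the inequality $\Re a \le t\cdot 1$ in $\mathcal{A}+\mathcal{A}^*$ is equivalent, via the Cayley/Möbius-type trick, to a \emph{norm} condition on a single element of $\mathcal{A}$ (or of $M_1(\mathcal{A})$), namely to $\|(a - (t+s))(a - (t-s))^{-1}\|$-type bounds for suitable scalars, or more simply to the numerical-range statement $\Re\langle \rho(a)x,x\rangle \le t\|x\|^2$ for the universal representation $\rho$ — and \emph{this} numerical-range/norm data is detected by one-dimensional-range information, i.e.\ by states, which is precisely what (ii) controls. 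Concretely, $\Re a \le t$ holds in $\mathcal{A}+\mathcal{A}^*$ iff $\sup\{\Re\langle\rho(a)x,x\rangle\}\le t$ over the completely isometric $\rho$, and one shows this supremum equals the supremum over finite dimensional $\rho$ by comparing operator norms of $(a-c)$ for large real $c$ (which (ii) matches exactly) and using that $\Re\langle Tx,x\rangle \le t$ for all unit $x$ iff $\|T - (c)\| \le$ (something) letting $c\to\infty$ appropriately — a bounded-selfadjoint-part computation. Isolating and verifying this reduction of the single order inequality to norm data is where the real work lies; everything else is bookkeeping with Hahn--Banach, GNS, and direct sums.
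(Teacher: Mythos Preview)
Your argument for (i) $\Rightarrow$ (ii) has a genuine gap, and it is actually the harder direction, not the routine one. The problem is that for a non-self-adjoint element $a$, the state space $S_1(\mathcal{A})$ only determines the numerical radius $w(a) = \sup\{|\varphi(a)| : \varphi \in S_1(\mathcal{A})\}$, and in general $\|a\|$ can be as large as $2 w(a)$. Your polarization sketch runs into exactly this factor of two, and the $2\times 2$ positivity reformulation $\|a\|\le 1 \Leftrightarrow \bigl[\begin{smallmatrix}1 & a\\ a^* & 1\end{smallmatrix}\bigr]\ge 0$ does not help: checking this positivity requires states on $M_2(\mathcal{A}+\mathcal{A}^*)$, and there is no reason why density of finite dimensional states in $S_1(\mathcal{A})$ should propagate to density in $S_1(M_2(\mathcal{A}+\mathcal{A}^*))$. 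The vague claim that a state on $M_2(\mathcal{A}+\mathcal{A}^*)$ ``restricts to a matrix-coefficient functional coming from a state on $\mathcal{A}$'' does not rescue this, since the off-diagonal component of such a state is typically not a state on $\mathcal{A}$ at all.

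The paper closes this gap with the Cayley transform, which you mention but deploy in the wrong direction. The point is that while $\|a\|$ is not determined by $S_1(\mathcal{A})$, the half-plane condition $\Re b \ge 0$ \emph{is}: it is exactly $\Re\varphi(b)\ge 0$ for all $\varphi\in S_1(\mathcal{A})$. So one first uses density of finite dimensional states to bound the spectral radius of $a$ (forcing $1-a$ to be invertible in $\mathcal{A}$ via the Neumann series), forms $b=(1+a)(1-a)^{-1}\in\mathcal{A}$, checks $\Re\pi(b)\ge 0$ for every finite dimensional $\pi$ via the Cayley lemma, transfers this to all states by density, and then inverts the Cayley transform to conclude $\|a\|\le 1$.

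Your (ii) $\Rightarrow$ (i) via Hahn--Banach is correct in outline, and the obstacle you worry about is not real. You only ever use the order embedding at level $n=1$, and that \emph{does} follow from $\Pi$ being a unital isometry: a unital isometry induces a bijection $S_1(\mathcal{A})\leftrightarrow S_1(\Pi(\mathcal{A}))$, $\varphi\mapsto\varphi\circ\Pi^{-1}$, and since $\Re a\le t$ in $\mathcal{A}+\mathcal{A}^*$ is equivalent to $\Re\varphi(a)\le t$ for all $\varphi\in S_1(\mathcal{A})$, the inequality transfers. No complete order embedding is needed, and the Cayley workaround you propose, while it could be made to work, is unnecessary. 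The paper argues this direction differently: it observes that $\Pi(\mathcal{A})$ sits inside a product of matrix algebras and is therefore genuinely RFD, whence Theorem~\ref{T:EL_nsa} gives density of finite dimensional (matrix) states for $\Pi(\mathcal{A})$, and the unital isometry $\Pi$ pulls density back to $S_1(\mathcal{A})$.
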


Clearly, (ii) may also be rephrased by saying that there exist a family $\{\mathcal{H}_\lambda: \lambda \in \Lambda\}$ of finite dimensional Hilbert spaces and a completely contractive, isometric homomorphism $\pi: \mathcal{A} \to \prod_{\lambda \in \Lambda} B(\mathcal{H}_\lambda)$.

In the proof of Proposition \ref{prop:state_space}, we will use the Cayley transform.
The following lemma is certainly well known and essentially appears already in \cite[\S 5.2]{Neumann51},
see also \cite[Lemma 2.1]{Meyer01}. Since the exact formulation we require is slightly different,
we sketch the proof.

 \begin{lemma}
   \label{lem:cayley}
   The maps
   \begin{align*}
     \{ T \in B(\mathcal{H}): -1 \notin \sigma(T) \} &\leftrightarrow \{ S \in B(\mathcal{H}): 1 \notin \sigma(S) \} \\
     T &\mapsto (T-I)(T+I)^{-1} \\
     (I+S)(I-S)^{-1} &\mapsfrom S
   \end{align*}
   are mutually inverse bijections that restrict to bijections
   \begin{equation*}
     \{ T \in B(\mathcal{H}): \Re T \ge 0 \} \leftrightarrow \{ S \in B(\mathcal{H}): \|S\| \le 1, 1 \notin \sigma(S) \}.
   \end{equation*}
   
 \end{lemma}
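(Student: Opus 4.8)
The plan is to verify directly that the two maps are well-defined and mutually inverse, and then separately check the restriction to the positivity/contractivity correspondence. First I would observe that if $-1 \notin \sigma(T)$ then $T+I$ is invertible, so $(T-I)(T+I)^{-1}$ makes sense; moreover since $T-I$ and $(T+I)^{-1}$ are both functions of $T$ they commute, so we may equally write this as $(T+I)^{-1}(T-I)$, which is a convenience I would use freely. To see that $1 \notin \sigma(S)$ for $S = (T-I)(T+I)^{-1}$, compute $I - S = (T+I)(T+I)^{-1} - (T-I)(T+I)^{-1} = 2(T+I)^{-1}$, which is invertible; this also immediately gives the inverse map, since $(I+S)(I-S)^{-1} = (I+S) \cdot \tfrac12 (T+I) = \tfrac12\bigl((T+I) + (T-I)\bigr) = T$. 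The reverse composition is symmetric: starting from $S$ with $1 \notin \sigma(S)$, one has $I - S$ invertible, $T := (I+S)(I-S)^{-1}$ satisfies $T + I = 2(I-S)^{-1}$ invertible so $-1 \notin \sigma(T)$, and the same algebra runs backwards. These are routine computations that I would present compactly.

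The substantive part is the restriction statement. Given $T$ with $\Re T = \tfrac12(T + T^*) \ge 0$, I want to show $S = (T-I)(T+I)^{-1}$ has $\|S\| \le 1$. The clean way is the vector-norm computation: for $x \in \mathcal{H}$, write $y = (T+I)^{-1}x$, so $x = (T+I)y$ and $Sx = (T-I)y$. Then
\begin{equation*}
  \|x\|^2 - \|Sx\|^2 = \|(T+I)y\|^2 - \|(T-I)y\|^2 = 2\langle Ty, y\rangle + 2\langle y, Ty\rangle = 4\,\Re\langle Ty, y\rangle \ge 0,
\end{equation*}
using that the $\|Ty\|^2$ and $\|y\|^2$ terms cancel. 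Since $(T+I)^{-1}$ is a bijection, $y$ ranges over all of $\mathcal{H}$ as $x$ does, so $\|S\| \le 1$. Conversely, if $\|S\| \le 1$ and $1 \notin \sigma(S)$, then $T = (I+S)(I-S)^{-1}$ and for $y \in \mathcal{H}$ set $z = (I-S)y$, so the same identity read backwards gives $4\,\Re\langle Tz', z'\rangle \ge 0$ — more carefully, for arbitrary $w \in \mathcal{H}$ put $w = (I-S)y$; then $Tw = (I+S)y$ and $\Re\langle Tw, w\rangle = \tfrac14\bigl(\|(I+S)y\|^2 - \|(I-S)y\|^2\bigr) = \Re\langle Sy,y\rangle + \text{(cancelling terms)} \ge -\|S\|\,\|y\|^2 + \|y\|^2 \cdot 0$; since $\|S\|\le 1$ this is $\ge 0$, and as $w$ ranges over all of $\mathcal{H}$ we conclude $\Re T \ge 0$. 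I would streamline this so both directions come from the single polarization-type identity $\|(T\pm I)y\|^2 = \|Ty\|^2 \pm 4\,\Re\langle Ty,y\rangle + \|y\|^2$ relating the two sides symmetrically.

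I do not expect a genuine obstacle here — the lemma is elementary. The one point requiring a little care is keeping track of domains: one must use that $(T+I)^{-1}$ (resp.\ $(I-S)^{-1}$) is a \emph{surjection} onto $\mathcal{H}$ so that the substitution $y = (T+I)^{-1}x$ really does let $y$ range over everything, which is why the hypotheses are phrased in terms of $-1 \notin \sigma(T)$ and $1 \notin \sigma(S)$ rather than mere injectivity of $T+I$. A secondary subtlety is that the positivity condition $\Re T \ge 0$ does not by itself guarantee $-1 \notin \sigma(T)$ in general — but here the claim is only that the Cayley transform restricts to a bijection between the two displayed sets (positive real part \emph{and} $-1$ not in the spectrum on one side, contractive \emph{and} $1$ not in the spectrum on the other), so this is not an issue; I would just remark that on $\{\Re T \ge 0\}$ one always has $T + I$ bounded below on the real part, hence the relevant subset is $\{T : \Re T \ge 0,\ -1 \notin \sigma(T)\}$, matching the contractive side exactly. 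Since the paper only asks for a sketch, I would present the two composition identities and the single norm identity, and leave the symmetric bookkeeping to the reader.
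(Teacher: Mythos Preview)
Your approach is sound and differs from the paper's mainly in the contractivity step: the paper computes the operator identity $I - S^*S = 2(T^*+I)^{-1}(T+T^*)(T+I)^{-1}$ and reads off that $I - S^*S \ge 0$ iff $T + T^* \ge 0$, while you use the equivalent vector-level identity $\|(T+I)y\|^2 - \|(T-I)y\|^2 = 4\,\Re\langle Ty,y\rangle$. Both are standard and either is fine; the operator formulation handles both directions at once, whereas your polarization argument must be run separately in each direction.

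Two points need tightening. First, your remark that ``$\Re T \ge 0$ does not by itself guarantee $-1 \notin \sigma(T)$'' is incorrect: if $\Re T \ge 0$ then the numerical range of $T$ lies in the closed right half-plane, and since the spectrum is contained in the closure of the numerical range, $-1 \notin \sigma(T)$ follows. This is exactly how the paper verifies that $\{T : \Re T \ge 0\}$ sits inside the domain of the forward Cayley map, and it is part of what the lemma asserts, since the left-hand set in the restriction statement carries no explicit spectral hypothesis. Second, your converse computation has a slip: with $w = (I-S)y$ and $Tw = (I+S)y$ one gets $\Re\langle Tw,w\rangle = \|y\|^2 - \|Sy\|^2$, not $\tfrac14\bigl(\|(I+S)y\|^2 - \|(I-S)y\|^2\bigr) = \Re\langle Sy,y\rangle$. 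The former is the quantity that is manifestly $\ge 0$ under $\|S\|\le 1$; the latter need not be. The corrected identity still gives the conclusion via the surjectivity argument you describe.
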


 \begin{proof}
   The spectral mapping theorem shows that if $-1 \notin \sigma(T)$ and $S = (T-I)(T+I)^{-1}$, then $1 \notin \sigma(S)$. Similarly, if $1 \notin \sigma(S)$ and $T = (I+S)(I-S)^{-1}$, then $-1 \notin \sigma(T)$.
   A routine computation then shows that the two maps in the statement are inverse to each other.

   Next, if $T \in B(\mathcal{H})$ with $\Re T \ge 0$, then the numerical range and hence the spectrum of $T$ is contained
   in the closed right half plane, and so in particular $-1 \notin \sigma(T)$.
   Finally, let $T \in B(\mathcal{H})$ with $-1 \notin \sigma(T)$
   and $S = (T-I)(T+I)^{-1}$. Then
   \begin{equation*}
     \begin{split}
     I - S^* S &= I - (T^* + I)^{-1} (T^* - I) (T - I) (T + I)^{-1} \\
     &= 2 (T^* + I)^{-1} ( T + T^*) (T+I)^{-1}.
     \end{split}
   \end{equation*}
   This operator is positive if and only if $T + T^* \ge 0$. Therefore, $\|S\| \le 1$
   if and only if $\Re T \ge 0$.
 \end{proof}

 \begin{proof}[Proof of Proposition \ref{prop:state_space}]
   (i) $\Rightarrow$ (ii). We regard $\mathcal{A} \subset B(\mathcal{K})$ for some Hilbert space $\mathcal{K}$.
   Let $a \in \mathcal{A}$ and suppose that $\sup \{ \|\pi(a)\| \} = r < 1$, where
  the supremum is taken over the maps $\pi$ occurring in (ii). We will show that $\|a\| \le 1$. This will establish (ii).

  Let $\varphi \in S_1(\mathcal{A})$ be a finite dimensional state. Then $\varphi$ dilates to a finite dimensional representation of $\mathcal{A}$,
  and so $|\varphi(a)| \le r$ by assumption on $a$. Since the finite dimensional states are dense in ${S}_1(\mathcal{A})$, we have
  $|\varphi(a)| \le r$ for all  $\varphi \in S_1(\mathcal{A})$.
  It follows that the numerical radius of $a$, regarded as an operator on $\mathcal{K}$, is at most $r$, and so the spectral radius
  of $a$ is at most $r$. In particular, $1 - a$ is invertible in $\mathcal{A}$.

  Let $b= (1+a)(1-a)^{-1} \in \mathcal{A}$. We will show that $\Re b \ge 0$.
  If $\pi$ is a unital finite dimensional representation of $\mathcal{A}$, then $\|\pi(a)\| \le r < 1$,
  so Lemma \ref{lem:cayley} implies that
  \begin{equation*}
    \Re \pi(b) = \Re [ (I + \pi(a))(I - \pi(a))^{-1} ] \ge 0.
  \end{equation*}
Let $\varphi \in S_1(\mathcal{A})$ be a finite dimensional state. Then there exists a unital finite dimensional representation $\pi: \mathcal{A} \to B(\mathcal{H})$ and a unit vector $\xi \in \mathcal{H}$ with $\varphi = \langle \pi(\cdot) \xi, \xi \rangle$. Thus,
\begin{equation*}
  \Re \varphi(b) = \Re \langle \pi(b) \xi, \xi \rangle \ge 0.
\end{equation*}
Density of the finite dimensional states yields that $\Re \varphi(b) \ge 0$ for all states $\varphi \in S_1(\mathcal{A})$, and so $\Re b \ge 0$ as an element of $B(\mathcal{K})$. Applying Lemma \ref{lem:cayley} again,
we find that $\|a\| \le 1$, as desired.

(ii) $\Rightarrow$ (i) By assumption, there exists a unital completely contractive isometric homomorphism
$\pi: \mathcal{A} \to \prod_{\lambda \in \Lambda} B(\mathcal{H}_\lambda)$, where $\dim(\mathcal{H}_\lambda) < \infty$ for all $\lambda \in \Lambda$. Since $\pi$ is a unital isometry, we obtain a homeomorphism
\begin{equation*}
  \Phi: S_1(\pi(\mathcal{A})) \to S_1(\mathcal{A}), \quad \varphi \mapsto \varphi \circ \pi.
\end{equation*}
Since $\pi$ is completely contractive, every representation of $\pi(\mathcal{A})$ induces a representation of $\mathcal{A}$, and so
$\Phi$ maps finite dimensional states in $S_1(\pi(\mathcal{A}))$ to finite dimensional states in $S_1(\mathcal{A})$.
Finally, $\pi(\mathcal{A})$ is RFD since $\prod_{\lambda \in \Lambda} B(\mathcal{H}_\lambda)$ is RFD,
so the finite dimensional states in $S_1(\pi(\mathcal{A}))$ are weak-$*$ dense in $S_1(\pi(\mathcal{A}))$
by Theorem \ref{T:EL_nsa}. Therefore, the finite dimensional states in $S_1(\mathcal{A})$ are weak-$*$ dense in $S_1(\mathcal{A})$.
\end{proof}

Proposition \ref{prop:state_space} shows that if the finite dimensional states are weak-$*$ dense in the state
space of $\mathcal{A}$, then the norm of $\mathcal{A}$ can be recovered from finite dimensional representations
of $\mathcal{A}$. This raises the question of whether the norms of $M_n(\mathcal{A})$ can also be recovered,
i.e.\ whether $\mathcal{A}$ is RFD in this case.

Conversely,
does there exist a non-RFD operator algebra $\mathcal{A}$ whose finite dimensional states
are weak-$*$ dense in the state space?
Let us call an algebra of the form $\prod_{\lambda \in \Lambda} B(\mathcal{H}_\lambda)$, where $\dim(\mathcal{H}_\lambda) < \infty$ for all $\lambda \in \Lambda$, a product of matrix algebras.
With this terminology, Proposition \ref{prop:state_space} shows that we may equivalently ask:

\begin{question}
  Does there exist a unital operator algebra $\mathcal{A}$ that admits a unital completely contractive
  isometric embedding into a product of matrix algebras, but not a unital completely isometric embedding?
\end{question}

\section{SOT approximation}
\label{sec:SOT}

In this section, we will prove Theorem \ref{thm:not_approx_intro}.
As in the introduction, let
\begin{equation*}
  \mathcal{B} = \left\{
    \begin{bmatrix}
      {f} & 0 \\
      h & \overline{g}
  \end{bmatrix}: f,g \in A(\mathbb{D}), h \in C(\mathbb{T}) \right\} \subset M_2(C(\mathbb{T})).
\end{equation*}
It is clear that $\mathcal{B}$ is a unital operator algebra that is RFD; in fact, it is (completely isometrically)
$2$-subhomogeneous; see \cite{AHM+20a}.

To exhibit a representation of $\mathcal{B}$ that cannot be approximated by finite dimensional representations in SOT,
we have to recall a few basic facts about Toeplitz operators.
Let $H^2 \subset L^2(\mathbb{T})$ denote the classical Hardy space,
which consists of all functions in $L^2(\mathbb{T})$ whose negative Fourier coefficients vanish.
The Hardy space can also be thought of as a space of holomorphic functions on $\mathbb{D}$, but we will exclusively
work with the description as a subspace of $L^2(\mathbb{T})$.
Let $P: L^2(\mathbb{T}) \to H^2$ denote the orthogonal projection. If $h \in C(\mathbb{T})$, then
the \emph{Toeplitz operator} with symbol $h$ is defined to be
\begin{equation*}
  T_h: H^2 \to H^2, \quad f \mapsto P(h f).
\end{equation*}
It is clear that $T_h$ is a bounded linear operator with $\|T_h\| \le \|h\|_\infty$;
in fact, equality holds.
For background material on Toeplitz operators, see for instance \cite[Chapter 7]{Douglas98}.

Let
\begin{equation*}
  \pi: \mathcal{B} \to B(H^2 \oplus H^2), \quad
  \begin{bmatrix}
    {f} & 0 \\
    h & \overline{g}
  \end{bmatrix}
  \mapsto
  \begin{bmatrix}
    T_{{f}} & 0 \\
    T_h & T_{\overline{g}}
  \end{bmatrix}.
\end{equation*}
We call $\pi$ the \emph{Toeplitz representation} of $\mathcal{B}$.

\begin{proposition}
  The map $\pi$ is a unital completely contractive homomorphism and hence a representation of $\mathcal{B}$.
\end{proposition}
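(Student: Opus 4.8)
The plan is to realize $\pi$ as the compression of a $*$-representation of the $C^*$-algebra $M_2(C(\mathbb{T}))$ to a \emph{semi-invariant} subspace. Complete contractivity is then automatic from the dilation picture, multiplicativity follows from Sarason's compression lemma, the unital case is immediate, and ``hence a representation'' is just the definition.

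First I would let $\Psi \colon M_2(C(\mathbb{T})) \to B(L^2(\mathbb{T}) \oplus L^2(\mathbb{T}))$ be the unital $*$-representation sending $[a_{ij}]$ to the block operator $[M_{a_{ij}}]$, where $M_a$ denotes multiplication by $a \in C(\mathbb{T})$ on $L^2(\mathbb{T})$. As a unital $*$-homomorphism between $C^*$-algebras, $\Psi$ is unital and completely positive, hence u.c.c. Let $V \colon H^2 \oplus H^2 \hookrightarrow L^2 \oplus L^2$ be the inclusion isometry, so that $V^* = P \oplus P$ with $P \colon L^2 \to H^2$ the Riesz projection. Recalling that $T_a = P M_a|_{H^2}$ for $a \in C(\mathbb{T})$ (which is the definition) and that $M_0 = 0$, a block computation gives
\begin{equation*}
  V^* \Psi(b) V
  = \begin{bmatrix} P M_f|_{H^2} & 0 \\ P M_h|_{H^2} & P M_{\overline{g}}|_{H^2} \end{bmatrix}
  = \begin{bmatrix} T_f & 0 \\ T_h & T_{\overline{g}} \end{bmatrix}
  = \pi(b)
\end{equation*}
for every $b = \begin{bmatrix} f & 0 \\ h & \overline{g} \end{bmatrix} \in \mathcal{B}$. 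Since $b \mapsto V^* \Psi(b) V$ is the composition of the u.c.c.\ map $\Psi$ with the u.c.c.\ map $X \mapsto V^* X V$, it is u.c.c.; in particular $\|\pi^{(n)}(b)\| \le \|b\|$ for all $n$ and all $b \in M_n(\mathcal{B})$, and $\pi$ maps the unit of $\mathcal{B}$ to $V^* \Psi(1) V = V^* V = I$.

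For multiplicativity I would invoke Sarason's lemma: it suffices to show that $H^2 \oplus H^2$ is semi-invariant for the operator algebra $\Psi(\mathcal{B})$, i.e.\ that $H^2 \oplus H^2 = \mathcal{M} \ominus \mathcal{N}$ for a pair $\mathcal{N} \subseteq \mathcal{M}$ of $\Psi(\mathcal{B})$-invariant subspaces. The natural candidates are
\begin{equation*}
  \mathcal{M} = H^2 \oplus L^2 \supseteq \mathcal{N} = \{0\} \oplus (L^2 \ominus H^2).
\end{equation*}
Invariance of $\mathcal{M}$ under $\Psi(b)$ reduces, via $\Psi(b)(\xi \oplus \eta) = (f\xi) \oplus (h\xi + \overline{g}\,\eta)$, to the fact that $f \cdot H^2 \subseteq H^2$ since $f \in A(\mathbb{D}) \subseteq H^\infty$; invariance of $\mathcal{N}$ reduces, via $\Psi(b)(0 \oplus \eta) = 0 \oplus (\overline{g}\,\eta)$, to the fact that $\overline{g} \cdot (L^2 \ominus H^2) \subseteq L^2 \ominus H^2$ since $g \in A(\mathbb{D}) \subseteq H^\infty$ (a one-line Fourier-coefficient check). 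As $\mathcal{M} \ominus \mathcal{N} = H^2 \oplus H^2$, Sarason's lemma yields $V^* \Psi(b_1) \Psi(b_2) V = (V^* \Psi(b_1) V)(V^* \Psi(b_2) V)$ for all $b_1, b_2 \in \mathcal{B}$; combined with $\Psi(b_1)\Psi(b_2) = \Psi(b_1 b_2)$ and the fact that $\mathcal{B}$ is an algebra (a direct computation, using that $A(\mathbb{D})$ is an algebra and that the $(2,2)$-entry of $b_1 b_2$ is $\overline{g_1 g_2}$), this gives $\pi(b_1)\pi(b_2) = \pi(b_1 b_2)$.

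The only real content is the bookkeeping in the last paragraph — equivalently, the Toeplitz semicommutation identities $T_f T_a = T_{fa}$ and $T_{\overline{g}} T_a = T_{\overline{g} a}$ for $f, g$ analytic and $a \in C(\mathbb{T})$, which one could also verify by hand. I do not expect a serious obstacle; the one thing to get right is the choice of the semi-invariant nesting, after which complete contractivity is free rather than requiring a direct estimate.
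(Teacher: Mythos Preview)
Your proof is correct and coincides with the paper's approach: the same dilation $\Psi$ to multiplication operators on $L^2\oplus L^2$ is used for complete contractivity, and the paper explicitly notes (parenthetically) the same semi-invariant chain $\mathcal{M}=H^2\oplus L^2 \supset \mathcal{N}=0\oplus (H^2)^\bot$ as an alternative route to multiplicativity. The only cosmetic difference is that the paper first verifies multiplicativity directly from the Toeplitz identities $T_hT_f=T_{hf}$ and $T_{\overline g}T_h=T_{\overline g h}$ and relegates the Sarason argument to a remark, whereas you make Sarason the main argument and mention the Toeplitz identities as the equivalent hands-on check.
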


\begin{proof}
  If $f_i,g_i \in A(\mathbb{D})$ and $h_i \in C(\mathbb{T})$ for $i=1,2$, then
  \begin{equation*}
    \begin{bmatrix}
      f_1 & 0 \\
      h_1 & \overline{g_1}
    \end{bmatrix}
    \begin{bmatrix}
      f_2 & 0 \\
      h_2 & \overline{g_2}
    \end{bmatrix}
    =
    \begin{bmatrix}
      f_1 f_2 & 0 \\
      h_1 f_2 + \overline{g_1} h_2 & \overline{g_1} \overline{g_2}.
    \end{bmatrix}
  \end{equation*}
  Therefore, the basic relations
  \begin{equation*}
    T_{\overline{g}} T_h = T_{\overline{g} h} \quad \text{ and } \quad T_{h} T_f = T_{h f}
  \end{equation*}
  for $h \in C(\mathbb{T})$ and $f,g \in A(\mathbb{D})$, see \cite[Proposition 7.5]{Douglas98}, imply that $\pi$ is multiplicative.
  
  To see that $\pi$ is completely contractive, we will construct a dilation to a $*$-homomorphism.
  For $h \in C(\mathbb{T})$ let
  \begin{equation*}
    M_h: L^2(\mathbb{T}) \to L^2(\mathbb{T}), \quad f \mapsto h f,
  \end{equation*}
  denote the multiplication operator with symbol $h$.
  Let
  \begin{equation*}
    \sigma: M_2(C(\mathbb{T})) \to B(L^2(\mathbb{T}) \oplus L^2(\mathbb{T})), \quad
    \begin{bmatrix}
      h_{11} & h_{12} \\ h_{21} & h_{22}
    \end{bmatrix}
    \mapsto
    \begin{bmatrix}
      M_{h_{11}} & M_{h_{12}} \\
      M_{h_{21}} & M_{h_{22}}
    \end{bmatrix}.
  \end{equation*}
  Then $\sigma$ is a $*$-homomorphism, and hence completely contractive.
  Moreover,
  \begin{equation*}
    \pi(a) = (P \oplus P) \sigma(a) \big|_{H^2 \oplus H^2} \quad \text{ for all } a \in \mathcal{B},
  \end{equation*}
  so $\pi$ is completely contractive as well.

  (Multiplicativity of $\pi$ can also be seen from this dilation,
  since $H^2 \oplus L^2$ and $(H^2 \oplus L^2) \ominus (H^2 \oplus H^2) = 0 \oplus (H^2)^\bot$ are invariant for $\sigma(\mathcal{B})$,
  so $H^2 \oplus H^2$ is semi-invariant.)
\end{proof}

\begin{remark}
  \label{rem:extension}
  Let $\mathcal{B}^* = \{ B^*: B \in \mathcal{B}\} \subset M_2(C(\mathbb{T}))$. Then $\mathcal{B} + \mathcal{B}^*$ is dense in $M_2(C(\mathbb{T}))$ as $A(\mathbb{D}) + A(\mathbb{D})^*$ is dense in $C(\mathbb{T})$. By definition, this means that $\mathcal{B}$ is a Dirichlet algebra.

  As a consequence, we can see that the Toeplitz representation $\pi$ does not extend to a $*$-representation of $M_2(C(\mathbb{T}))$.
  Indeed, since $\mathcal{B} + \mathcal{B}^*$ is dense in $M_2(C(\mathbb{T}))$, the unital complete contraction $\pi$ has a unique
  extension to a completely positive map on $M_2(C(\mathbb{T}))$, given by
  \begin{equation*}
    M_2(C(\mathbb{T})) \to B(H^2 \oplus H^2), \quad
    \begin{bmatrix}
      a & b \\ c & d
    \end{bmatrix}
    \mapsto
    \begin{bmatrix}
      T_a & T_b \\ T_c & T_d
    \end{bmatrix},
  \end{equation*}
  and this map is not multiplicative as the map $C(\mathbb{T}) \to B(H^2), h \mapsto T_h$, is not multiplicative.
\end{remark}

If $\pi$ did extend to a $*$-representation of $M_2(C(\mathbb{T}))$, then it would be the point SOT limit of a net of finite dimensional representations by the Exel--Loring theorem. Thus, Remark \ref{rem:extension} can be regarded as a consistency check for the following result,
which will complete the proof of Theorem \ref{thm:not_approx_intro}.
\begin{theorem}
  \label{thm:Toeplitz_no_approx}
  The Toeplitz representation $\pi$ is not the point SOT limit of a net of finite dimensional representations of $\mathcal{B}$.
\end{theorem}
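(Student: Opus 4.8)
The plan is to argue by contradiction: suppose $\pi$ is a point SOT limit of finite dimensional representations $\pi_\lambda$ of $\mathcal{B}$. The key is that the elements of $\mathcal{B}$ sitting in the $(1,1)$ and $(2,2)$ corners are analytic and co-analytic symbols, so the Toeplitz operators $T_f$ ($f \in A(\mathbb{D})$) and $T_{\overline g}$ ($g \in A(\mathbb{D})$) are the restrictions of multiplication operators to invariant subspaces, and the single operator $T_z$ is the unilateral shift $S$. First I would extract from the net, via compressing with the projections onto the first and second copies of $H^2$ (which are $\pi(1)$-invariant in a suitable sense — more precisely, exploiting the two corner representations $f \mapsto T_f$ and $g \mapsto T_{\overline g}$ of the disc algebra separately), approximating nets for the shift $S$ and for its adjoint $S^*$ living on a common finite dimensional piece. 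The central obstruction is the asymmetry between $S$ and $S^*$: $S$ is an isometry with $S^*S = I$, whereas $S^*$ is a co-isometry with $SS^* = I - P_{\mathbb{C}}$. I would show that SOT approximation of $\pi$ would force, on the finite dimensional spaces, the existence of operators $X_\lambda, Y_\lambda$ with $X_\lambda \to S$, $Y_\lambda \to S^*$ in SOT, $X_\lambda Y_\lambda = $ (image of $\bar z \cdot z = 1$, i.e.\ roughly $I$) while also $Y_\lambda X_\lambda \to S^* S = I$ and the products behaving multiplicatively because $\pi_\lambda$ is a homomorphism.

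The heart of the matter is the off-diagonal entry. Take $h = 1 \in C(\mathbb{T})$ in the lower-left corner; then $\pi$ sends the corresponding element to $T_1 = I$ acting as a map from the first $H^2$ to the second. More usefully, take $h = z$ and $h = \bar z$: the products with the diagonal analytic/co-analytic entries let us realize, inside $\mathcal{B}$, relations that on the Toeplitz side read $T_{\bar z} T_z = T_{\bar z z} = T_1 = I$ but $T_z T_{\bar z} = T_{|z|^2} = I$ as well — wait, here I would instead use that in $\mathcal{B}$ the product of a co-analytic $(2,2)$-entry with an $h$ in the $(2,1)$-slot and then with an analytic $(1,1)$-entry produces $T_{\overline g} T_h T_f = T_{\overline g h f}$, an honest Toeplitz operator with a general continuous symbol, and these symbols exhaust $C(\mathbb{T})$. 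So $\pi$ restricted to products of this shape recovers the (completely bounded but non-multiplicative) Toeplitz map $h \mapsto T_h$. Any SOT limit of finite dimensional representations would make the induced map on $\mathcal{B} + \mathcal{B}^* = M_2(C(\mathbb{T}))$ a point-WOT limit of $*$-representations, hence multiplicative on a dense set, hence multiplicative — contradicting Remark \ref{rem:extension}. The technical step I expect to be the real obstacle is making the passage from ``$\pi$ is a point SOT limit of finite dimensional representations of the non-self-adjoint algebra $\mathcal{B}$'' to ``the Toeplitz compression of $M_2(C(\mathbb{T}))$ is a point-WOT limit of $*$-homomorphisms''; this requires using that a finite dimensional representation of $\mathcal{B}$ is automatically the compression of a $*$-representation of $M_2(C(\mathbb{T}))$ (by Arveson extension plus finite dimensionality, the dilation is finite dimensional), and then controlling the multiplicativity defect $T_{ab} - T_a T_b$ under SOT limits.

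Concretely, I would set up the contradiction as follows. Fix the element $u = \left[\begin{smallmatrix} 0 & 0 \\ 1 & \bar z \end{smallmatrix}\right]$ and $v = \left[\begin{smallmatrix} z & 0 \\ 0 & 0 \end{smallmatrix}\right]$ in $\mathcal{B}$ (and more such, with $z^n, \bar z^n$). Then $\pi(u) = \left[\begin{smallmatrix} 0 & 0 \\ I & S^* \end{smallmatrix}\right]$ and $\pi(v) = \left[\begin{smallmatrix} S & 0 \\ 0 & 0 \end{smallmatrix}\right]$, and $uv = \left[\begin{smallmatrix} 0 & 0 \\ S & 0 \end{smallmatrix}\right]$, whereas $vu = 0$; one checks products like $u^* u$, $v v^*$ land, after extending to $\mathcal{B}+\mathcal{B}^*$, on $T_{|z|^2+1}$-type operators. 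If $\pi_\lambda \to \pi$ point SOT, then since $\pi_\lambda$ is multiplicative, $\pi_\lambda(u)\pi_\lambda(v) = \pi_\lambda(uv) \to \pi(uv)$ in SOT, which is fine; the contradiction comes from pairing with the adjoint side. Each finite dimensional $\pi_\lambda$ extends to a $*$-representation $\sigma_\lambda$ of $M_2(C(\mathbb{T}))$ on a finite dimensional space (Arveson + Stinespring, both finite dimensional), so $\sigma_\lambda$ decomposes as a sum of point evaluations $M_2(C(\mathbb{T})) \to M_2$, $F \mapsto F(\zeta)$; thus on $\mathcal{B}$ we have $\pi_\lambda(b) = V_\lambda^* \bigoplus_i b(\zeta_i^{(\lambda)}) V_\lambda$ for an isometry $V_\lambda$. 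Now the WOT-limit of the associated u.c.p.\ extensions $\widetilde\pi_\lambda := V_\lambda^*\bigoplus_i (\cdot)(\zeta_i^{(\lambda)}) V_\lambda$ on $M_2(C(\mathbb{T}))$ exists along a subnet (by compactness in the BW topology) and restricts to $\pi$ on $\mathcal{B}$; on the self-adjoint generators $\left[\begin{smallmatrix}\bar z & 0 \\ 0 & 0\end{smallmatrix}\right]$ etc.\ one gets the adjoint operators $S^*$ etc.\ in WOT; hence the limit u.c.p.\ map agrees with the Toeplitz map $\left[\begin{smallmatrix} a & b \\ c & d\end{smallmatrix}\right] \mapsto \left[\begin{smallmatrix} T_a & T_b \\ T_c & T_d \end{smallmatrix}\right]$ of Remark \ref{rem:extension}. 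But each $\widetilde\pi_\lambda$ is a compression of a $*$-representation, and a BW limit of such along with the fact that we actually have SOT (not just WOT) convergence on $\pi(\mathcal{B})$ forces asymptotic multiplicativity: $\|\widetilde\pi_\lambda(b_1 b_2) - \widetilde\pi_\lambda(b_1)\widetilde\pi_\lambda(b_2)\| \to 0$ would not by itself follow from WOT, so here is exactly where I would use the SOT hypothesis — SOT convergence of $\pi_\lambda(b)$ for $b$ in a generating set of $\mathcal{B}$ gives SOT convergence of products, and combined with the $*$-representation structure of $\sigma_\lambda$ one shows the defect $T_{z}T_{\bar z} - T_{z \bar z} = T_z T_z^* - I = -P_{\mathbb C}$ would have to be an SOT limit of zero, i.e.\ $P_{\mathbb{C}} = 0$, absurd. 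That last rank-one obstruction — the compactness/non-multiplicativity of $h \mapsto T_h$ witnessed by the single rank-one projection $I - SS^*$ — is what makes the argument go, and isolating it cleanly under the SOT hypothesis is the step I expect to require the most care.
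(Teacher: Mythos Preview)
Your proposal correctly isolates the rank-one obstruction $I - T_z T_z^*$, and the overall strategy --- find an identity that every finite-dimensional representation satisfies, show it defines a point-SOT closed set, and check that $\pi$ violates it --- is exactly right. But there is a genuine gap at precisely the step you flag as requiring care. Dilating a finite-dimensional $\pi_\lambda$ via Arveson--Stinespring to a $*$-representation of $M_2(C(\mathbb{T}))$ and compressing back tells you only that the u.c.p.\ extension $\widetilde\pi_\lambda$ is a compression of a $*$-homomorphism --- but that is Stinespring and holds for \emph{every} u.c.p.\ map, so it adds no information. In particular, $\widetilde\pi_\lambda$ is not multiplicative in general, and the implicit claim underlying your conclusion, namely that $\pi_\lambda(b)^*\pi_\lambda(b) = \pi_\lambda(b^*b)$ for $b = \left[\begin{smallmatrix}0&0\\0&\bar z\end{smallmatrix}\right]$ (which is what would force $T_z T_{\bar z} = I$ in the limit), is simply false. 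A concrete witness: the diagonal-type representation $\left[\begin{smallmatrix} f & 0 \\ h & \bar g \end{smallmatrix}\right] \mapsto \left[\begin{smallmatrix} f(0) & 0 \\ 0 & \overline{g(0)} \end{smallmatrix}\right]$ on $\mathbb{C}^2$ is a finite-dimensional representation of $\mathcal{B}$ that sends $b$ to $0$ while sending $b^*b = \left[\begin{smallmatrix}0&0\\0&1\end{smallmatrix}\right]$ to a rank-one projection. So the route through Remark~\ref{rem:extension} does not close, and ``$-P_{\mathbb{C}}$ is an SOT limit of zero'' is unjustified.

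The paper fills this gap with a structural result (Lemma~\ref{lem:rep_dec}): every finite-dimensional representation of $\mathcal{B}$ decomposes along a reducing subspace into a \emph{unitary-type} piece, where $\sigma_2(z)$ is unitary, and a \emph{diagonal-type} piece, where the off-diagonal map $\varphi$ vanishes. Neither piece alone satisfies the naive identity you want, but the \emph{product}
\[
\Big[\sigma\begin{bmatrix}0&0\\0&1\end{bmatrix} - \sigma\begin{bmatrix}0&0\\0&\bar z\end{bmatrix}^{*}\sigma\begin{bmatrix}0&0\\0&\bar z\end{bmatrix}\Big]\,\sigma\begin{bmatrix}0&0\\1&0\end{bmatrix}
\]
vanishes for both types simultaneously: the bracketed factor dies on the unitary-type summand, the rightmost factor on the diagonal-type summand. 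Because only a single adjoint appears, this identity cuts out a point-SOT closed set of representations, and $\pi$ violates it since $(I - T_z T_z^*)\cdot I \neq 0$. This two-type decomposition --- not the dilation picture --- is the missing ingredient in your sketch.
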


Roughly speaking, the fact that the Toeplitz representation cannot be approximated point SOT by finite dimensional
representations is related to the fact that
every isometry or co-isometry on a finite dimensional Hilbert space
is unitary, but the operator
\begin{equation*}
  \pi \left(
    \begin{bmatrix}
      z & 0 \\
      0 & \overline{z}
  \end{bmatrix} \right)
    =
    \begin{bmatrix}
      T_z & 0 \\
      0 & T_{\overline{z}}
    \end{bmatrix}
\end{equation*}
cannot be approximated in SOT by unitary operators, since it is not an isometry.
(But it can be approximated in WOT.)
The main difficulty here is that the algebra $\mathcal{B}$ admits
many representations $\sigma$ for which
\begin{equation*}
  \sigma \left(
    \begin{bmatrix}
      z & 0 \\
      0 & \overline{z}
  \end{bmatrix} \right)
\end{equation*}
is not built from isometries or co-isometries. Indeed, for any pair of contractions $T$ and $S$
on possibly different Hilbert spaces $\mathcal{H}$ and $\mathcal{K}$,
von Neumann's inequality (or the Sz.-Nagy dilation theorem) show that there exists
a representation $\sigma$ of $\mathcal{B}$ on $B(\mathcal{H} \oplus \mathcal{K})$ with
\begin{equation*}
  \sigma \left(
  \begin{bmatrix}
    f & 0 \\
    h & \overline{g}
\end{bmatrix} \right)
  =
  \begin{bmatrix}
    f(T) & 0 \\
    0 & g(S)^*
  \end{bmatrix}
\end{equation*}
for all $h \in C(\mathbb{T})$ and all $f,g \in \mathbb{C}[z]$.
We will need to separate out these representations, which we will call \emph{diagonal type}.

First, we show that any representation of $\mathcal{B}$ decomposes in matrix form.

\begin{lemma}
  \label{L:rep_dec}
  Let $\sigma: \mathcal{B} \to B(\mathcal{H})$ be a unital representation.
  Then there exist an orthogonal decomposition $\mathcal{H} = \mathcal{H}_1 \oplus \mathcal{H}_2$,
  unital representations $\sigma_j: A(\mathbb{D}) \to B(\mathcal{H}_j)$ for $j=1,2$
  and a completely contractive linear map $\varphi: C(\mathbb{T}) \to B(\mathcal{H}_1,\mathcal{H}_2)$
  such that
  \begin{equation*}
    \sigma
    \left(
      \begin{bmatrix}
        f & 0 \\ h & \overline{g}
      \end{bmatrix} \right)
      = \begin{bmatrix}
       \sigma_1(f) & 0 \\ \varphi(h) & \sigma_2(g)^*
      \end{bmatrix}
  \end{equation*}
  with respect to the decomposition $\mathcal{H} = \mathcal{H}_1 \oplus \mathcal{H}_2$.
  Such a decomposition of $\sigma$ is unique.
  Moreover,
  \begin{equation*}
    \varphi(\overline{g} h f) = \sigma_2(g)^* \varphi(h) \sigma_1(f)
  \end{equation*}
  for all $f,g \in A(\mathbb{D})$ and all $h \in C(\mathbb{T})$.
\end{lemma}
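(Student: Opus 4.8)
The plan is to exploit the idempotents in $\mathcal{B}$ to produce the decomposition, and then to push von Neumann's inequality / the description of contractive representations of $A(\mathbb{D})$ through the corners. First I would consider the element $e = \begin{bmatrix} 1 & 0 \\ 0 & 0 \end{bmatrix} \in \mathcal{B}$, which is a norm-one idempotent; since $\sigma$ is a completely contractive homomorphism, $\sigma(e)$ is a contractive idempotent in $B(\mathcal{H})$ and hence an orthogonal projection. Setting $\mathcal{H}_1 = \sigma(e)\mathcal{H}$ and $\mathcal{H}_2 = (I - \sigma(e))\mathcal{H}$ gives the orthogonal decomposition, and a quick computation with the products $e \cdot B$, $B \cdot e$, $(1-e)\cdot B \cdot (1-e)$ for $B = \begin{bmatrix} f & 0 \\ h & \overline{g} \end{bmatrix}$ shows that $\sigma(B)$ has lower-triangular form $\begin{bmatrix} \alpha(f,g,h) & 0 \\ \beta(f,g,h) & \delta(f,g,h)\end{bmatrix}$ relative to this decomposition: the $(1,1)$ corner $\alpha$ only sees $f$, the $(2,2)$ corner $\delta$ only sees $g$, because $e B e$ depends only on $f$ and $(1-e)B(1-e)$ only on $\overline g$, while $\beta$ is the $(2,1)$ corner picking up $h$ (and, on products, also $f$ and $g$).

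Next I would identify the three pieces. Define $\sigma_1(f)$ to be the $(1,1)$ corner of $\sigma\left(\begin{bmatrix} f & 0 \\ 0 & 0\end{bmatrix}\right)$ and $\tau(g)$ the $(2,2)$ corner of $\sigma\left(\begin{bmatrix} 0 & 0 \\ 0 & \overline{g}\end{bmatrix}\right)$; since $f \mapsto \begin{bmatrix} f & 0 \\ 0 & 0\end{bmatrix}$ and $g \mapsto \begin{bmatrix} 0 & 0 \\ 0 & \overline g\end{bmatrix}$ are completely contractive into $\mathcal{B}$ and $\sigma(e)$, $I - \sigma(e)$ are the relevant projections, both $\sigma_1$ and $\tau$ are unital completely contractive. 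Multiplicativity of $\sigma$ restricted to these subalgebras (which are genuine subalgebras of $\mathcal{B}$, using $\begin{bmatrix} f_1 & 0 \\ 0 & 0\end{bmatrix}\begin{bmatrix} f_2 & 0 \\ 0 & 0\end{bmatrix} = \begin{bmatrix} f_1 f_2 & 0 \\ 0 & 0\end{bmatrix}$ and the analogous anti-multiplicative-looking but actually multiplicative relation $\begin{bmatrix} 0 & 0 \\ 0 & \overline{g_1}\end{bmatrix}\begin{bmatrix} 0 & 0 \\ 0 & \overline{g_2}\end{bmatrix} = \begin{bmatrix} 0 & 0 \\ 0 & \overline{g_1 g_2}\end{bmatrix}$) makes $\sigma_1$ a unital completely contractive homomorphism of $A(\mathbb{D})$ and $\tau$ a unital completely contractive \emph{anti}-homomorphism of the algebra $\overline{A(\mathbb{D})}$; setting $\sigma_2(g) = \tau(g)^*$ turns this into a unital completely contractive homomorphism $\sigma_2 \colon A(\mathbb{D}) \to B(\mathcal{H}_2)$, so that $\delta(f,g,h) = \sigma_2(g)^*$. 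Finally $\varphi(h)$ is the $(2,1)$ corner of $\sigma\left(\begin{bmatrix} 0 & 0 \\ h & 0\end{bmatrix}\right)$; the map $h \mapsto \begin{bmatrix} 0 & 0 \\ h & 0\end{bmatrix}$ is a completely contractive linear (not multiplicative) map $C(\mathbb{T}) \to \mathcal{B}$ — here one uses that $\left\| \begin{bmatrix} 0 & 0 \\ h & 0\end{bmatrix}\right\| = \|h\|_\infty$ in $M_2(C(\mathbb{T}))$ and similarly at all matrix levels — and compressing to the corner is completely contractive, so $\varphi$ is completely contractive.

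For the covariance identity $\varphi(\overline{g}hf) = \sigma_2(g)^*\varphi(h)\sigma_1(f)$, I would simply expand the product
\begin{equation*}
\begin{bmatrix} 0 & 0 \\ 0 & \overline{g}\end{bmatrix}
\begin{bmatrix} 0 & 0 \\ h & 0\end{bmatrix}
\begin{bmatrix} f & 0 \\ 0 & 0\end{bmatrix}
= \begin{bmatrix} 0 & 0 \\ \overline{g}hf & 0\end{bmatrix}
\end{equation*}
inside $\mathcal{B}$ (valid since $\overline g h f \in C(\mathbb{T})$), apply $\sigma$, and read off the $(2,1)$ corner using the block-triangular forms already established, which gives $\varphi(\overline{g}hf) = \sigma_2(g)^* \varphi(h) \sigma_1(f)$ directly. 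Uniqueness is immediate: any decomposition of the stated form forces $\mathcal{H}_1 = \ran \sigma(e)$ (it is the range of the $(1,1)$ block of $\sigma(e)$, which must be $I$ on $\mathcal{H}_1$ and $0$ on $\mathcal{H}_2$ since $\sigma_j$ are unital), and then $\sigma_1,\sigma_2,\varphi$ are determined as the respective corners of $\sigma$ on the generators. The one point requiring a little care — the main obstacle, such as it is — is the bookkeeping that the three subalgebras/subspaces of $\mathcal{B}$ corresponding to $f$, $\overline g$, $h$ really are complete isometries onto their images (so that "unital completely contractive" genuinely transfers), and that $\sigma_2$ comes out a homomorphism rather than an anti-homomorphism after taking the adjoint; both are routine once one writes out the relevant $2\times 2$ operator-matrix norm computations.
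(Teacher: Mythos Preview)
Your proposal is correct and follows essentially the same route as the paper: use the contractive idempotent $e=\begin{bmatrix}1&0\\0&0\end{bmatrix}$ to get the orthogonal decomposition, read off $\sigma_1,\sigma_2,\varphi$ as the corners, and derive the covariance identity from the product $\begin{bmatrix}0&0\\0&\overline g\end{bmatrix}\begin{bmatrix}0&0\\h&0\end{bmatrix}\begin{bmatrix}f&0\\0&0\end{bmatrix}$. One cosmetic remark: your bookkeeping around ``anti-homomorphism'' is slightly tangled (you define $\tau$ with input $g\in A(\mathbb{D})$, and $\tau$ is then an honest homomorphism), but since $A(\mathbb{D})$ is commutative this distinction is moot and the argument goes through unchanged.
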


\begin{proof}
  This follows from routine arguments involving $2 \times 2$ matrices.
  For completeness, we provide the proof.
  Let
  \begin{equation*}
    P_1 =
    \sigma \left(
      \begin{bmatrix}
        1 & 0 \\ 0 & 0
    \end{bmatrix} \right)
    \quad \text{ and }
    \quad
    P_2 =
    \sigma \left(
      \begin{bmatrix}
        0 & 0 \\ 0 & 1
    \end{bmatrix} \right).
  \end{equation*}
  Then $P_1$ and $P_2$ are contractive idempotents and hence orthogonal projections satisfying
  $P_1 P_2 = 0 = P_2 P_1$ and $P_1 + P_2 = I$.
  Thus, if we define $\mathcal{H}_j = \ran(P_j)$ for $j=1,2$, then
  we obtain an orthogonal decomposition $\mathcal{H} = \mathcal{H}_1 \oplus \mathcal{H}_2$.

  If $f \in A(\mathbb{D})$, then
  \begin{equation*}
    \begin{bmatrix}
      f & 0 \\
      0 & 0
    \end{bmatrix}
    =
    \begin{bmatrix}
      1 & 0 \\
      0 & 0
    \end{bmatrix}
    \begin{bmatrix}
      f & 0 \\
      0 & 0
    \end{bmatrix}
    =
    \begin{bmatrix}
      f & 0 \\
      0 & 0
    \end{bmatrix}
    \begin{bmatrix}
      1 & 0 \\
      0 & 0
    \end{bmatrix}.
  \end{equation*}
  Applying $\sigma$ to these equations, it follows that
  \begin{equation*}
    \sigma \left(
      \begin{bmatrix}
        f & 0 \\
        0 & 0
      \end{bmatrix}
    \right) =
    \begin{bmatrix}
      \sigma_1(f) & 0 \\
      0 & 0
    \end{bmatrix}
  \end{equation*}
  for some map $\sigma_1: A(\mathbb{D}) \to B(\mathcal{H}_1)$, which is necessarily
  a unital representation.
Similarly, there exists a representation $\widetilde{\sigma}: {A(\mathbb{D})}^* \to B(\mathcal{H}_2)$
  such that
  \begin{equation*}
    \sigma \left(
      \begin{bmatrix}
        0 & 0 \\
        0 & \overline{g}
      \end{bmatrix}
    \right) =
    \begin{bmatrix}
      0 & 0 \\
    0 & \widetilde{\sigma}_2(\overline{g})
    \end{bmatrix}
  \end{equation*}
  Defining $\sigma_2(g) = \widetilde{\sigma}_2(\overline{g})^*$, we obtain the desired representation
  $\sigma_2: A(\mathbb{D}) \to B(\mathcal{H}_2)$.

  Finally, if $h \in C(\mathbb{T})$ and $f,g \in A(\mathbb{D})$, then
  \begin{equation*}
    \begin{bmatrix}
      0 & 0 \\
      \overline{g} h f & 0
    \end{bmatrix}
    =
    \begin{bmatrix}
      0 & 0 \\
      0 & \overline{g}
    \end{bmatrix}
    \begin{bmatrix}
      0 & 0 \\
      h & 0
    \end{bmatrix}
    \begin{bmatrix}
      f & 0 \\
      0 & 0
    \end{bmatrix}.
  \end{equation*}
  Firstly, choosing $f=g = 1$ and applying $\sigma$, we find that
  \begin{equation*}
    \sigma \left(
      \begin{bmatrix}
        0 & 0 \\
        h & 0
      \end{bmatrix}
    \right) =
    \begin{bmatrix}
      0 & 0 \\
      \varphi(h) & 0
    \end{bmatrix}
  \end{equation*}
  for some necessary completely contractive linear map $\varphi: C(\mathbb{T}) \to B(\mathcal{H}_1,\mathcal{H}_2)$.
  Secondly, it then follows that
  \begin{equation*}
    \varphi(\overline{g} h f) = \sigma_2(g)^* \varphi(h) \sigma_1(f)
  \end{equation*}
  for all $h \in C(\mathbb{T})$ and all $f,g \in A(\mathbb{D})$, which completes the proof of
  existence and of the additional statement.

  To see uniqueness, simply observe that if we are given any such decomposition of $\sigma$,
  then the orthogonal projection onto $\mathcal{H}_1$ is necessarily given by
  \begin{equation*}
    \begin{bmatrix}
      1 & 0 \\ 0 & 0
    \end{bmatrix}
    = \sigma \left(
    \begin{bmatrix}
      1 & 0 \\ 0 & 0
  \end{bmatrix} \right),
  \end{equation*}
  and similarly for $\mathcal{H}_2$. The maps $\sigma_1,\sigma_2$ and $\varphi$
  are then uniquely determined by the orthogonal projections and by $\sigma$.
\end{proof}

We require the following standard fact from operator theory.
\begin{lemma}
  \label{lem:unitary_cnu_dec}
  Let $\mathcal{H}$ be a finite dimensional Hilbert space and let
  $A \in B(\mathcal{H})$ with $\|A\| \le 1$. Then $A$ decomposes as $A = A_0 \oplus U$,
  where $U$ is unitary and $\lim_{n \to \infty} A_0^n = 0$.
\end{lemma}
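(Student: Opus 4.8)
The plan is to split $\mathcal H$ orthogonally into the subspace spanned by the eigenvectors of $A$ with unimodular eigenvalues and its complement. The only point that really needs care is that the crude Riesz decomposition of $A$ into a part with spectrum on $\mathbb T$ and a part with spectrum inside $\mathbb D$ need not be orthogonal; contractivity of $A$ is exactly what forces orthogonality, via the observation in the first step below.

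First I would record that if $x \in \mathcal H$ with $Ax = \lambda x$, $x \neq 0$ and $|\lambda| = 1$, then $A^* x = \overline{\lambda} x$. Indeed, $\|x\|^2 = \|Ax\|^2 = \langle A^*Ax, x\rangle$, and since $I - A^*A \geq 0$ this equality forces $(I - A^*A)x = 0$, that is, $A^*Ax = x$; applying $A^*$ to $Ax = \lambda x$ then gives $A^* x = \overline{\lambda} x$. Next, let $\mathcal H_1 \subseteq \mathcal H$ be the linear span of all eigenvectors of $A$ whose eigenvalue has modulus $1$. By the previous step $\mathcal H_1$ is invariant under both $A$ and $A^*$, hence reducing, so $A = U \oplus A_0$ with respect to $\mathcal H = \mathcal H_1 \oplus \mathcal H_1^{\perp}$, where $U = A|_{\mathcal H_1}$ and $A_0 = A|_{\mathcal H_1^{\perp}}$. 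Moreover, eigenvectors of $A$ for distinct unimodular eigenvalues $\lambda \neq \mu$ are orthogonal: using $A^* y = \overline{\mu}\, y$ one gets $\lambda\langle x,y\rangle = \langle Ax,y\rangle = \langle x, A^*y\rangle = \mu\langle x,y\rangle$. Hence $\mathcal H_1$ admits an orthonormal basis consisting of eigenvectors of $A$ with unimodular eigenvalues, so $U$ is unitary.

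Finally I would check that $A_0^n \to 0$. The operator $A_0$ is a contraction on the finite dimensional space $\mathcal H_1^{\perp}$ having no eigenvalue of modulus $1$: an eigenvector of $A_0$ with unimodular eigenvalue would, since $\mathcal H_1^{\perp}$ is $A$-invariant, be an eigenvector of $A$ with unimodular eigenvalue and therefore lie in $\mathcal H_1$, which is absurd. Since $\|A_0\| \le 1$ we have $\sigma(A_0) \subseteq \overline{\mathbb D}$, and combined with the previous sentence the spectral radius of $A_0$ is $< 1$; by Gelfand's formula $\|A_0^n\|^{1/n} \to r(A_0) < 1$, so $A_0^n \to 0$. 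This yields the stated decomposition. I do not expect a genuine obstacle here beyond being careful about the orthogonality point flagged in the first paragraph.
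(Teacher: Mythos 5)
Your proof is correct. The paper takes a different (though equally elementary) route: it puts $A$ in Schur upper triangular form and observes that a unimodular diagonal entry of a contractive matrix forces the corresponding row and column to vanish off the diagonal, so that after a permutation $A$ splits as a diagonal unitary plus an upper triangular block with spectral radius $<1$. You instead construct the unitary summand intrinsically as $\mathcal{H}_1 = \operatorname{span}\{x : Ax=\lambda x,\ |\lambda|=1\}$, using the rigidity fact that $\|Ax\|=\|x\|$ and $\|A\|\le 1$ force $A^*Ax=x$ (hence $A^*x=\overline{\lambda}x$), which makes $\mathcal{H}_1$ reducing and $A|_{\mathcal{H}_1}$ unitary; the complement then has no unimodular eigenvalues, so its spectral radius is $<1$ and Gelfand's formula finishes. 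Both arguments hinge on the same underlying rigidity of contractions, but yours is basis-free, identifies the unitary part canonically, and is essentially the finite dimensional specialization of the Sz.-Nagy--Foias decomposition into unitary and completely non-unitary parts that the paper cites as the general reference, whereas the paper's version is a quick matrix computation once the Schur form is invoked. All the steps you flag as delicate (orthogonality of eigenspaces for distinct unimodular eigenvalues, and that $\mathcal{H}_1$ is the orthogonal sum of those eigenspaces) are handled correctly.
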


\begin{proof}
  This is the decomposition of a contraction into unitary and completely non-unitary part,
  see \cite[Theorem I.3.2]{SFB+10}, specialized to finite dimensions.
  For the convenience of the reader, we provide a linear algebra argument.

  We may without loss of generality assume that $A$ is an upper triangular matrix.
  If the $k$-th diagonal entry of $A$ has modulus one, then since $\|A\| \le 1$,
  the $k$-th row and the $k$-th column are zero outside of the diagonal.
  By permuting the basis, we can therefore write
  \begin{equation*}
    A =
    \begin{bmatrix}
      U & 0 \\
      0 & A_0
    \end{bmatrix},
  \end{equation*}
  where $U$ is a diagonal unitary matrix and $A_0$ is upper triangular with diagonal entries
  of modulus strictly less than one. Hence the spectral radius of $A_0$ is strictly less than one,
  so that $\lim_{n \to \infty} A_0^n = 0$.
\end{proof}

Let $\sigma: \mathcal{B} \to B(\mathcal{H})$ be a unital representation,
decomposed as in Lemma \ref{L:rep_dec}. We say that $\sigma$
is of \emph{unitary type} if $\sigma_1(z)$ and $\sigma_2(z)$ are unitary operators on $\mathcal{H}_1$
and $\mathcal{H}_2$, respectively.
We say that $\sigma$ is of \emph{diagonal type} if $\varphi = 0$.

\begin{lemma}
  \label{lem:rep_dec}
  Let $\sigma: \mathcal{B} \to B(\mathcal{H})$ be a unital representation and assume that $\dim \mathcal{H} < \infty$.
  Then there exists a reducing subspace $M$ for $\sigma$ such that
  \begin{equation*}
    \sigma_M: \mathcal{B} \to B(M), \quad a \mapsto \sigma(a) \big|_M,
  \end{equation*}
  is of unitary type and
  \begin{equation*}
    \sigma_{M^\bot}: \mathcal{B} \to B(M^\bot), \quad a \mapsto \sigma(a) \big|_{M^\bot}
  \end{equation*}
  is of diagonal type.
\end{lemma}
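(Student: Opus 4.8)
The plan is to read $M$ off directly from the structure theory already available. Applying Lemma \ref{L:rep_dec} to $\sigma$, write $\mathcal{H} = \mathcal{H}_1 \oplus \mathcal{H}_2$ with unital representations $\sigma_j : A(\mathbb{D}) \to B(\mathcal{H}_j)$ and a completely contractive map $\varphi : C(\mathbb{T}) \to B(\mathcal{H}_1, \mathcal{H}_2)$ satisfying $\varphi(\overline{g} h f) = \sigma_2(g)^* \varphi(h) \sigma_1(f)$. Since $\dim \mathcal{H}_j < \infty$, Lemma \ref{lem:unitary_cnu_dec} applied to the contraction $\sigma_j(z)$ produces an orthogonal decomposition $\mathcal{H}_j = \mathcal{H}_j^u \oplus \mathcal{H}_j^c$ that reduces $\sigma_j(z)$, with $\sigma_j(z)|_{\mathcal{H}_j^u}$ unitary and $\bigl(\sigma_j(z)|_{\mathcal{H}_j^c}\bigr)^n \to 0$ in norm. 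Because polynomials are dense in $A(\mathbb{D})$ and $\sigma_j$ is contractive, this decomposition in fact reduces $\sigma_j(A(\mathbb{D}))$. The candidate will be $M = \mathcal{H}_1^u \oplus \mathcal{H}_2^u$, so that $M^\bot = \mathcal{H}_1^c \oplus \mathcal{H}_2^c$.

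The heart of the matter is a rigidity property of $\varphi$. On $\mathbb{T}$ one has $z\overline{z} = 1$, hence $\overline{z}^n h z^n = h$, so taking $f = g = z^n$ in the intertwining identity gives $\varphi(h) = (\sigma_2(z)^*)^n \varphi(h) \sigma_1(z)^n$ for every $n \in \mathbb{N}$. Decomposing $\varphi(h)$ as a $2 \times 2$ operator matrix against $\mathcal{H}_1 = \mathcal{H}_1^u \oplus \mathcal{H}_1^c$ and $\mathcal{H}_2 = \mathcal{H}_2^u \oplus \mathcal{H}_2^c$, and using that these splittings reduce $\sigma_1(z)$ and $\sigma_2(z)$, this identity descends to each of the four blocks. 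I would then let $n \to \infty$: in the two blocks landing in $\mathcal{H}_2^c$ the left factor is $\bigl(\sigma_2(z)|_{\mathcal{H}_2^c}\bigr)^{*n}$, which tends to $0$ in norm since $\|X^{*n}\| = \|X^n\|$ and $\bigl(\sigma_2(z)|_{\mathcal{H}_2^c}\bigr)^n \to 0$; in the two blocks emanating from $\mathcal{H}_1^c$ the right factor $\bigl(\sigma_1(z)|_{\mathcal{H}_1^c}\bigr)^n \to 0$; and the complementary factors are always contractions. Hence the $\mathcal{H}_1^c \to \mathcal{H}_2^c$, $\mathcal{H}_1^u \to \mathcal{H}_2^c$, and $\mathcal{H}_1^c \to \mathcal{H}_2^u$ blocks all vanish, i.e.\ $\varphi(h) = P_{\mathcal{H}_2^u} \varphi(h) P_{\mathcal{H}_1^u}$ for all $h \in C(\mathbb{T})$; equivalently $\varphi(h) \mathcal{H}_1^u \subseteq \mathcal{H}_2^u$, $\varphi(h)^* \mathcal{H}_2^u \subseteq \mathcal{H}_1^u$, and $\varphi(h) \mathcal{H}_1^c = 0$.

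With this in hand the remainder is bookkeeping with the block form of $\sigma$ from Lemma \ref{L:rep_dec}. The three relations just displayed, together with the fact that $\mathcal{H}_j^u$ and $\mathcal{H}_j^c$ reduce $\sigma_j$, show that both $M = \mathcal{H}_1^u \oplus \mathcal{H}_2^u$ and $M^\bot = \mathcal{H}_1^c \oplus \mathcal{H}_2^c$ are invariant under $\sigma(a)$ for every $a \in \mathcal{B}$; thus $M$ reduces $\sigma$. By the uniqueness clause of Lemma \ref{L:rep_dec}, the canonical decomposition of $\sigma_M$ has component spaces $\mathcal{H}_1^u, \mathcal{H}_2^u$ and component representations $\sigma_j|_{\mathcal{H}_j^u}$, whose values at $z$ are the unitaries $\sigma_j(z)|_{\mathcal{H}_j^u}$, so $\sigma_M$ is of unitary type. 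Likewise the canonical decomposition of $\sigma_{M^\bot}$ has component spaces $\mathcal{H}_1^c, \mathcal{H}_2^c$ and off-diagonal map $h \mapsto P_{\mathcal{H}_2^c} \varphi(h)|_{\mathcal{H}_1^c}$, which is $0$ because $\varphi(h)\mathcal{H}_1^c = 0$; hence $\sigma_{M^\bot}$ is of diagonal type.

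The only genuinely delicate step should be the block-vanishing of $\varphi$. It uses finite-dimensionality in an essential way — not through $\|X^{*n}\| = \|X^n\|$, which is general, but through Lemma \ref{lem:unitary_cnu_dec}, which guarantees that the completely non-unitary parts of $\sigma_1(z)$ and $\sigma_2(z)$ have powers tending to zero \emph{in norm}; on an infinite-dimensional space one would at best get strong convergence, and for the adjoint powers of $\sigma_2(z)|_{\mathcal{H}_2^c}$ not even that, so the argument genuinely needs $\dim\mathcal{H}<\infty$. Everything else reduces to manipulation of $2 \times 2$ operator matrices and the uniqueness statement established in Lemma \ref{L:rep_dec}.
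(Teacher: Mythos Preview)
Your proof is correct and follows essentially the same route as the paper's: decompose $\sigma$ via Lemma~\ref{L:rep_dec}, split each $\sigma_j(z)$ into unitary and nilpotent-power parts using Lemma~\ref{lem:unitary_cnu_dec}, set $M$ to be the sum of the unitary parts, and iterate the intertwining relation $\varphi(h) = (\sigma_2(z)^*)^n \varphi(h) \sigma_1(z)^n$ to kill the off-diagonal blocks. The only cosmetic difference is that the paper runs the vanishing argument for $X = \varphi(1)$ alone and then propagates to general $h$ via the algebraic identity $\varphi(h_1 + \overline{h_2}) = X h_1(A) + h_2(B)^* X$, whereas you apply the iteration directly to $\varphi(h)$ for every $h$; both are equally valid.
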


\begin{proof}
  Let $\sigma$ be decomposed as in Lemma \ref{L:rep_dec} and define
  $A = \sigma_1(z) \in B(\mathcal{H}_1)$ and $B = \sigma_2(z) \in B(\mathcal{H}_2)$ and $X = \varphi(1) \in B(\mathcal{H}_1,\mathcal{H}_2)$.
  Then
  \begin{equation*}
    B^* X A = \varphi(\overline{z} 1 z) = \varphi(1) =  X.
  \end{equation*}
  This operator equation has been well studied in the context of Toeplitz operators; see, for instance,
  \cite{Douglas69}.
  We borrow an argument from there.
  By Lemma \ref{lem:unitary_cnu_dec}, we can write $A = A_0 \oplus U$ for a unitary operator
  $U$ and an operator $A_0$ with $\lim_{n \to \infty} A_0^n = 0$.
  Let $U$ act on $M_1 \subset \mathcal{H}_1$.
  Similarly, decompose $B = B_0 \oplus V$, where $V$ is unitary on $M_2 \subset \mathcal{H}_2$.
  Inductively,
  \begin{equation*}
    (B^*)^n X A^n = X.
  \end{equation*}
  Thus, if $\xi \in M_1^\bot$, then
  \begin{equation*}
    \|X \xi\| = \|(B^*)^n X A^n \xi\| = \|(B^*)^n X A_0^n \xi\| \le \|A_0^n \xi\| \xrightarrow{n \to \infty} 0,
  \end{equation*}
  so $X \xi = 0$.
  This implies that
  $X P_{M_1} = X$.
  Applying the same argument to the equation $X^* = A^* X^* B$, we find that
  $P_{M_2} X = X$.
  Therefore
  \begin{equation}
    \label{eqn:red_intertwine}
    X = X P_{M_1} = P_{M_2} X. 
  \end{equation}

  Let $M = M_1 \oplus M_2 \subset \mathcal{H}_1 \oplus \mathcal{H}_2$.
  To see that $M$ is reducing, it suffices to show that $P_M$ commutes
  with $\sigma(a)$ for all $a$ in the dense subset
  \begin{equation*}
    \left\{
      \begin{bmatrix}
        f & 0 \\
        h_1 + \overline{h_2} & \overline{g}
      \end{bmatrix}:
    f,g,h_1,h_2 \in \mathbb{C}[z] \right\}
  \end{equation*}
  of $\mathcal{B}$. So let $f,g, h_1,h_2$ be polynomials.
  Then using the identity for $\varphi$ in Lemma \ref{L:rep_dec} as well as multiplicativity of $\sigma_1$ and $\sigma_2$, we find that
  \begin{equation}
    \label{eqn:red_split}
    \begin{split}
    \sigma \left(
      \begin{bmatrix}
        f & 0 \\ h_1 + \overline{h_2} & \overline{g}
    \end{bmatrix} \right)
      &=
      \begin{bmatrix}
        \sigma_1(f) & 0 \\ \varphi(h_1 + \overline{h_2}) & \sigma_2(g)^*
      \end{bmatrix} \\
      &=
      \begin{bmatrix}
        f(A) & 0 \\
        X h_1(A) + h_2(B)^* X & g(B)^*
      \end{bmatrix}.
    \end{split}
  \end{equation}
  Using that $P_{M_1}$ commutes with $A$, $P_{M_2}$ commutes with $B$ (and hence with $B^*)$
  and \eqref{eqn:red_intertwine}, it follows that $P_M = P_{M_1} \oplus P_{M_2}$
  commutes with the operator matrix in \eqref{eqn:red_split}. Hence $M$ is reducing.

  Moreover, we see from the $(2,1)$-entry in \eqref{eqn:red_split} and \eqref{eqn:red_intertwine} that
  \begin{equation*}
    \varphi(h) = \varphi(h) P_{M_1} = P_{M_2} \varphi(h) \quad \text{ for all } h \in C(\mathbb{T}),
  \end{equation*}
  so
  $\varphi(h)$ maps $M_1$ into $M_2$ and is zero on $M_1^\bot$.
  The decomposition of $\sigma_M$ is given by
  \begin{equation*}
    \sigma_M \left(
      \begin{bmatrix}
        f & 0 \\
        h & \overline{g}
    \end{bmatrix} \right)
      =
    \begin{bmatrix}
      \sigma_1(f) \big|_{M_1}  & 0 \\
      \varphi(h) \big|_{M_1} & \sigma_2(g)^* \big|_{M_2}
    \end{bmatrix}.
  \end{equation*}
  In particular,
  $\sigma_1(z) \big|_{M_1} = A \big|_{M_1} = U$ is unitary.
  Similarly, $\sigma_2(z) \big|_{M_2} = B \big|_{M_2} = V$ is unitary.
  Hence, $\sigma_M$ is of unitary type.
  Finally, since $\varphi(h)$ is zero on $M_1^\bot$ for all $h \in C(\mathbb{T})$, it follows that $\sigma_{M^\bot}$ is of diagonal type.
\end{proof}

Note that the decomposition into unitary and diagonal type is not unique. Indeed, representations can be simultaneously unitary and diagonal type.

We are now ready to prove Theorem \ref{thm:Toeplitz_no_approx}.

\begin{proof}[Proof of Theorem \ref{thm:Toeplitz_no_approx}]
  Let $\sigma: \mathcal{B} \to B(H^2 \oplus H^2)$ be a finite dimensional representation.
  We claim that
  \begin{equation}
    \label{eqn:SOT_closed}
    \left[ \sigma\left(
      \begin{bmatrix}
        0 & 0 \\ 0 &1
    \end{bmatrix} \right)
      -
      \sigma\left(
        \begin{bmatrix}
          0 & 0 \\ 0 & \overline{z}
      \end{bmatrix} \right)^*
      \sigma\left(
        \begin{bmatrix}
          0 & 0 \\ 0 & \overline{z}
    \end{bmatrix} \right) \right]
    \sigma\left(
      \begin{bmatrix}
        0 & 0 \\ 1 & 0
      \end{bmatrix}
    \right) = 0.
  \end{equation}

  To prove this claim, let $\mathcal{H}_0 = \sigma(1) (H^2 \oplus H^2)$, which is finite dimensional. Then $\sigma$ induces a unital representation
  $\widetilde{\sigma}: \mathcal{B} \to B(\mathcal{H}_0)$.
  Lemma \ref{lem:rep_dec} implies that there is a reducing subspace $M \subset \mathcal{H}_0 \subset H^2 \oplus H^2$ for $\sigma$ such that
  the representation $\widetilde{\sigma}_M$ is unitary type and $\widetilde{\sigma}_{M^\bot}$ is diagonal type.
  Since $\widetilde{\sigma}_M$ is unitary type, the first factor in \eqref{eqn:SOT_closed} is zero for $\widetilde{\sigma}_M$ in place of $\sigma$.
  Since $\widetilde{\sigma}_{M^\bot}$ is diagonal type, the second factor in \eqref{eqn:SOT_closed} is zero for $\widetilde{\sigma}_{M^\bot}$
  in place of $\sigma$. Hence the expression in \eqref{eqn:SOT_closed} is zero for $\widetilde{\sigma}$ and hence also for $\sigma$.
  This proves the claim.

  Notice that \eqref{eqn:SOT_closed} is equivalent to
  \begin{equation}
    \label{eqn:SOT_closed_2}
    \sigma\left(
      \begin{bmatrix}
        0 & 0 \\ 1 &0
    \end{bmatrix} \right)
      -
      \sigma\left(
        \begin{bmatrix}
          0 & 0 \\ 0 & \overline{z}
      \end{bmatrix} \right)^*
      \sigma\left(
        \begin{bmatrix}
        0 & 0 \\ \overline{z} & 0
      \end{bmatrix}
      \right)
    = 0.
  \end{equation}
  Recall that if $(T_\lambda)$ and $(S_\lambda)$ are bounded nets in $B(\mathcal{H})$ such that
  $T_\lambda \to T$ in SOT and $S_\lambda \to S$ in WOT, then $S_\lambda T_\lambda \to S T$ in WOT.
  Indeed, if $x,y \in \mathcal{H}$, then
  \begin{align*}
    |\langle (S_\lambda T_\lambda - S T) x, y \rangle |
    &\le | \langle S_\lambda (T_\lambda - T) x , y \rangle | +
     | \langle (S_\lambda - S) T x, y \rangle| \\
    &\le (\sup_\lambda \|S_\lambda\|) \|(T_\lambda - T) x\| \|y \| +
    | \langle (S_\lambda - S) T x, y \rangle| \xrightarrow{\lambda} 0.
  \end{align*}
  From this fact, it follows that the set of all representations satisfying \eqref{eqn:SOT_closed_2} (or equivalently \eqref{eqn:SOT_closed}) is closed
  in the point SOT.

  Finally, notice that the Toeplitz representation $\pi$ does not satisfy \eqref{eqn:SOT_closed_2}, as
  \begin{align*}
    \pi\left(
      \begin{bmatrix}
        0 & 0 \\ 1 &0
    \end{bmatrix} \right)
      -
      \pi\left(
        \begin{bmatrix}
          0 & 0 \\ 0 & \overline{z}
      \end{bmatrix} \right)^*
      \pi\left(
        \begin{bmatrix}
        0 & 0 \\ \overline{z} & 0
      \end{bmatrix}
      \right)
    =
    \begin{bmatrix}
      0 & 0 \\
      I - T_z T_z^* & 0
    \end{bmatrix},
  \end{align*}
  which is not zero since $I - T_z T_z^*$ is the orthogonal projection in $H^2$ onto the constant functions.
  Therefore, $\pi$ cannot be approximated point SOT by finite dimensional representations.
\end{proof}

\bibliographystyle{amsplain}
\bibliography{literature.bib}

\providecommand{\bysame}{\leavevmode\hbox to3em{\hrulefill}\thinspace}
\providecommand{\MR}{\relax\ifhmode\unskip\space\fi MR }
\providecommand{\MRhref}[2]{%
  \href{http://www.ams.org/mathscinet-getitem?mr=#1}{#2}
}
\providecommand{\href}[2]{#2}
\begin{thebibliography}{10}

\bibitem{ANT19}
Vadim Alekseev, Tim Netzer, and Andreas Thom, \emph{Quadratic modules,
  {$C^*$}-algebras, and free convexity}, Trans. Amer. Math. Soc. \textbf{372}
  (2019), no.~11, 7525--7539. \MR{4029672}

\bibitem{AHM+20a}
Alexandru Aleman, Michael Hartz, John~E. McCarthy, and Stefan Richter,
  \emph{Multiplier tests and subhomogeneity of multiplier algebras}, Doc. Math.
  \textbf{27} (2022), 719--764. \MR{4432527}

\bibitem{Arveson98}
William Arveson, \emph{Subalgebras of {$C\sp *$}-algebras. {III}.
  {M}ultivariable operator theory}, Acta Math. \textbf{181} (1998), no.~2,
  159--228. \MR{1668582 (2000e:47013)}

\bibitem{Blecher99}
David~P. Blecher, \emph{Modules over operator algebras, and the maximal
  {$C^\ast$}-dilation}, J. Funct. Anal. \textbf{169} (1999), no.~1, 251--288.
  \MR{1726755}

\bibitem{BL04}
David~P. Blecher and Christian Le~Merdy, \emph{Operator algebras and their
  modules---an operator space approach}, London Mathematical Society
  Monographs. New Series, vol.~30, The Clarendon Press, Oxford University
  Press, Oxford, 2004, Oxford Science Publications. \MR{2111973 (2006a:46070)}

\bibitem{CD21}
Rapha\"el Clou\^atre and Adam Dor-On, \emph{Finite-dimensional approximations
  and semigroup coactions for operator algebras}, arXiv:2101.09776.

\bibitem{CM19}
Rapha\"{e}l Clou\^{a}tre and Laurent~W. Marcoux, \emph{Residual finite
  dimensionality and representations of amenable operator algebras}, J. Math.
  Anal. Appl. \textbf{472} (2019), no.~2, 1346--1368. \MR{3906428}

\bibitem{CR19}
Rapha\"{e}l Clou\^{a}tre and Christopher Ramsey, \emph{Residually
  finite-dimensional operator algebras}, J. Funct. Anal. \textbf{277} (2019),
  no.~8, 2572--2616. \MR{3990728}

\bibitem{Douglas69}
R.~G. Douglas, \emph{On the operator equation {$S^{\ast} XT=X$} and related
  topics}, Acta Sci. Math. (Szeged) \textbf{30} (1969), 19--32. \MR{250106}

\bibitem{Douglas98}
Ronald~G. Douglas, \emph{Banach algebra techniques in operator theory}, second
  ed., Graduate Texts in Mathematics, vol. 179, Springer-Verlag, New York,
  1998. \MR{1634900 (99c:47001)}

\bibitem{Effros1997}
Edward~G. Effros and Soren Winkler, \emph{Matrix convexity: operator analogues
  of the bipolar and {H}ahn-{B}anach theorems}, J. Funct. Anal. \textbf{144}
  (1997), no.~1, 117--152. \MR{1430718}

\bibitem{EL92}
Ruy Exel and Terry~A. Loring, \emph{Finite-dimensional representations of free
  product {$C^*$}-algebras}, Internat. J. Math. \textbf{3} (1992), no.~4,
  469--476. \MR{1168356}

\bibitem{HL19}
Michael Hartz and Martino Lupini, \emph{Dilation theory in finite dimensions
  and matrix convexity}, Israel J. Math. \textbf{245} (2021), no.~1, 39--73.

\bibitem{Meyer01}
Ralf Meyer, \emph{Adjoining a unit to an operator algebra}, J. Operator Theory
  \textbf{46} (2001), no.~2, 281--288. \MR{1870408}

\bibitem{MP10}
Meghna Mittal and Vern~I. Paulsen, \emph{Operator algebras of functions}, J.
  Funct. Anal. \textbf{258} (2010), no.~9, 3195--3225. \MR{2595740
  (2011h:47150)}

\bibitem{Paulsen02}
Vern Paulsen, \emph{Completely bounded maps and operator algebras}, Cambridge
  Studies in Advanced Mathematics, vol.~78, Cambridge University Press,
  Cambridge, 2002. \MR{1976867 (2004c:46118)}

\bibitem{Pisier03}
Gilles Pisier, \emph{Introduction to operator space theory}, London
  Mathematical Society Lecture Note Series, vol. 294, Cambridge University
  Press, Cambridge, 2003. \MR{2006539 (2004k:46097)}

\bibitem{SFB+10}
B{\'e}la Sz.-Nagy, Ciprian Foias, Hari Bercovici, and L{\'a}szl{\'o}
  K{\'e}rchy, \emph{Harmonic analysis of operators on {H}ilbert space}, second
  ed., Universitext, Springer, New York, 2010. \MR{2760647 (2012b:47001)}

\bibitem{Thompson21}
Ian Thompson, \emph{Maximal {$\rm C^*$}-covers and residual
  finite-dimensionality}, J. Math. Anal. Appl. \textbf{514} (2022), no.~1,
  Paper No. 126277, 26. \MR{4418193}

\bibitem{Neumann51}
Johann von Neumann, \emph{Eine {S}pektraltheorie f\"ur allgemeine {O}peratoren
  eines unit\"aren {R}aumes}, Math. Nachr. \textbf{4} (1951), 258--281.
  \MR{0043386 (13,254a)}

\bibitem{WW99}
Corran Webster and Soren Winkler, \emph{The {K}rein-{M}ilman theorem in
  operator convexity}, Trans. Amer. Math. Soc. \textbf{351} (1999), no.~1,
  307--322. \MR{1615970}

\end{thebibliography}

\end{document}